\documentclass[preprint,11pt]{elsarticle}

\usepackage{amssymb}
\usepackage{graphicx}
\usepackage{amsmath}
 \usepackage{amsthm}
\newtheorem{theorem}{Theorem}
\newtheorem{lemma}{Lemma}
\usepackage{xcolor}

\newtheorem{example}{Test problem}
\theoremstyle{case}

\usepackage{algorithm}
\usepackage{algpseudocode}
\usepackage{graphicx}
\usepackage{subcaption}
\usepackage{caption}
\newtheorem{definition}{Definition}

\begin{document}

\begin{frontmatter}

\title{\textbf{A Robust ADI-Based Direct Discontinuous Galerkin Method for Two-Dimensional Singularly Perturbed Parabolic  PDEs}} 

	\author[mymainadress]{Vijay~Kumar}
\ead{vijaykumarmathematics@gmail.com}

\author[mymainadress]{Gautam~Singh\corref{mycorrespondingauthor}}
\cortext[mycorrespondingauthor]{CorrespondingAuthor}
\ead{gautam@nitt.edu}

\address[mymainadress]{{Department of Mathematics, National
		Institute of Technology, Tiruchirappalli},
	{Tiruchirappalli},
	{620015},
	{Tamil Nadu},
	{India}.}

\begin{abstract}
This study proposes two-dimensional one-parameter singularly perturbed parabolic problems and develops an operator-splitting direct discontinuous Galerkin (DDG) method for their numerical solution. To improve computational efficiency, an alternating direction implicit (ADI) splitting approach decomposes the original two-dimensional problem into  two separate one-dimensional subproblems. The spatial discretization is performed using the DDG method on a piecewise-uniform Shishkin mesh, enabling accurate resolution of the sharp boundary layers that arise in singularly perturbed problems (SPPs). For the temporal discretization is carried out with the backward Euler scheme to ensure robustness and stability. The theoretical analysis establishes an optimal convergence rate of order $k$ in the spatial discretization, where $k$ denotes the degree of the polynomial basis functions, and $(\Delta t)$-order convergence in temporal dimensions. We prove that the resulting fully discrete scheme is consistent, stable, and convergent. Numerical experiments confirm the theoretical results and demonstrate the effectiveness and robustness of the proposed method.
\end{abstract}

\begin{keyword}
 Singularly perturbed problems \sep Parabolic equations \sep DDG method \sep Operator splitting \sep ADI method \sep Backward Euler scheme \sep DG norm.

 \MSC 35B25 \sep 65M12 \sep 65M15 \sep 65M50 \sep 65M60.

\end{keyword}

\end{frontmatter}
\section{Introduction}\label{secint}
SPPs  are found in many scientific and engineering fields, such as control theory~\cite{kokotovic}, transport and flow modeling~\cite{Modeling_Fluid}, and fluid mechanics~\cite{Schlicting}. A defining feature of these problems is the presence of small perturbation parameters associated with the highest-order derivatives. This leads to the formation of boundary or interior layers, within which the solution undergoes sharp spatial variations. As a result, SPPs are well known for their analytical complexity and computational difficulty.

Standard numerical methods, including the finite difference method (FDM) and finite element method (FEM), may fail to resolve the steep gradients associated with boundary and interior layers, particularly when the perturbation parameter is small. In convection-dominated regimes, conventional discretizations may also produce non-physical oscillations, thereby compromising the accuracy and reliability of the numerical solution; see, for instance,~\cite{Roos}. These difficulties motivate the development of numerical methods that remain stable and accurate uniformly with respect to the perturbation parameter. A widely adopted approach is the use of layer-adapted meshes, which allocate computational points strategically in regions where sharp variations occur~\cite{Lins}. Among the commonly employed layer-adapted meshes, the Shishkin mesh uses a piecewise-uniform distribution with fine and coarse regions, whereas the Bakhvalov mesh uses a smooth grading to concentrate mesh points near layers; see, e.g.,~\cite{Roos}. A variety of numerical schemes have subsequently been developed on such meshes to achieve parameter-uniform convergence. Comprehensive treatments of these approaches, including FDM and FEM, can be found in~\cite{Lins,Roos}.

Several operator-splitting strategies have been developed in the literature for the numerical treatment of two-dimensional parabolic SPPs within the finite difference framework. Clavero et al. \cite{clavero_bound} developed a numerical scheme that uses a fractional step method for time integration and finite difference spatial discretization on a specially designed mesh. Das and Natesan \cite{Das_$2$D_parabolic} created an FDM that uses a fractional-step technique to approximate the temporal derivative. This method converges to the second order in time. Moreover, Majumdar and Natesan \cite{Majumdar_$2$D_parabolic} proposed an ADI FDM that utilizes a uniform temporal discretization and an upwind scheme on a Shishkin mesh to effectively resolve boundary layers in the spatial domain.

Operator-splitting techniques within the FEM have been extensively examined for the efficient resolution of high-dimensional and time-dependent problems. Ganesan \cite{Ganesan_SUPG} proposed a heterogeneous FEM for population balance equations by partitioning the original problem into lower-dimensional subproblems across physical and internal coordinates. This approach simplifies the formulation and enables the use of appropriate discretization techniques, like  streamline upwind Petrov–Galerkin (SUPG) methods, to keep oscillations in check in advection-dominated regimes. Ganesan and Tobiska \cite{Ganesan_SUPG2} also applied these concepts to high-dimensional parabolic problems using quadrature- and nodal-based splitting methods, which led to stable and optimally convergent solutions. Das and Natesan \cite{Parabolic_FEM_ADI} proposed an ADI-type operator splitting streamline-diffusion FEM.  Moreover,  Ganesan and Singh \cite{Ganesan_SUPG3} made a related contribution by creating an operator-splitting scheme for the radiative transfer equation. This scheme combines SUPG and discontinuous Galerkin (DG) methods for spatial–angular discretization with backward Euler time integration. 

Ahmed et al. \cite{Ahmed1} used operator splitting on population balance equations in a similar way, breaking the problem down into a pure advection and a convection–diffusion subproblem. They used the backward Euler method to break up the time, and  DG schemes to make it stable. Subsequent enhancements in \cite{Ahmed2} integrated SUPG and DG methodologies within the splitting framework, yielding comparative analyses of stabilization techniques for convection-dominated issues.

Several operator-splitting strategies have been developed in the literature for the numerical treatment of two-dimensional parabolic SPPs within the DG methods, including weak Galerkin FEM (WG-FEM). Kumar et al. \cite{Naresh_one_para} proposed an operator-splitting ADI scheme within the WG-FEM framework for semilinear parabolic SPPs. The original two-dimensional problem is decomposed into a sequence of one-dimensional subproblems, which are solved using WG-FEM in the spatial direction, while temporal discretization is carried out using the Crank–Nicolson scheme on a Bakhvalov mesh. In a subsequent work \cite{Naresh_two_para}, extended this approach by combining the  operator-splitting   scheme with a high-order WG-FEM to address two-parameter singularly perturbed parabolic  problems. The resulting scheme employs a similar dimensional decomposition and utilizes a nonuniform Bakhvalov mesh, leading to improved accuracy and efficient resolution of boundary layers.

Similarly, Raina et al. \cite{Raina_one-para} developed an operator-splitting ADI scheme within the WG-FEM for two-dimensional time-dependent SPPs by decomposing the original problem into one-dimensional subproblems, thereby reducing computational complexity and storage requirements. The method employs WG-FEM for spatial discretization and the backward Euler scheme for time integration. In a subsequent study \cite{Raina_two-para}, an ADI operator-splitting approach was combined with the WG-FEM to solve two-parameter singularly perturbed parabolic problems on layer-adapted meshes, leading to improved computational efficiency while maintaining stability.


The DDG method provides an effective framework for the discretization of diffusion operators by incorporating suitable numerical fluxes across element interfaces. This formulation retains the local and flexible structure of DG methods while offering high-order accuracy and stability. An important advantage of the DDG framework is its compatibility with non-uniform and layer-adapted meshes, such as Shishkin, Bakhvalov, and exponentially graded meshes, which are particularly effective in resolving the sharp gradients arising in SPPs. To the best of our knowledge, the DDG method has not yet been systematically investigated for two-dimensional singularly perturbed parabolic problems. More specifically, an ADI-type operator-splitting strategy based on the DDG framework for two-dimensional one-parameter singularly perturbed parabolic problems has yet to be investigated. The proposed strategy transforms the original two-dimensional problem into a sequence of one-dimensional subproblems, thereby reducing the number of degrees of freedom and leading to significantly smaller algebraic systems. Moreover, the DDG framework is well suited for problems with limited regularity and is capable of accurately resolving sharp boundary layers while maintaining stability and high-order accuracy. Motivated by these advantages, we develop an ADI-type operator-splitting DDG method for the model problem \eqref{E1}, corresponding to the one-parameter cases. The proposed method combines the computational efficiency of operator splitting with the flexibility and accuracy of the DDG framework, and rigorous stability and convergence analyses are established.

The remainder of the article is organized as follows. Section~\ref{sec(2.1)} introduces the model problem together with its operator-splitting formulation into two one-dimensional subproblems. The solution decomposition, spatial discretization, and the basic notation of the DDG framework are presented in Section~\ref{sec(2.2)}. Section~\ref{sec(2.3)} introduces the DDG formulation, establishes the permissibility conditions of the DDG numerical fluxes, and presents the fundamental theoretical results, including  coercivity, and stability. The error analysis and numerical experiments are presented in Sections~\ref{sec(2.4)} and~\ref{sec(2.5)}, respectively. Finally, Section~\ref{Conclusion} concludes the paper.

Throughout this paper, $C$ denotes a generic positive constant whose value may vary from line to line but which is independent of the perturbation parameters $\mu$ and the mesh parameter $\mathcal{\mathcal{N}}$ and $M$.

\section{The continuous problem}\label{sec(2.1)}
Consider the two-dimensional one-parameter singularly perturbed parabolic initial boundary-value problem (IBVP) defined on the domain $\mathcal{D}$ = $I$ $\times$ $(0, T]$, with $I = I_x\times I_y$, with $I_x=(0,1)$ and $I_y=(0,1)$:
\begin{equation}\label{E1}
	\left\{
	\begin{array}{ll}
		\frac{\partial u}{\partial t}-\mu \Delta u + \boldsymbol{b}(x,y)\cdot\nabla u + a(x,y) u = r(x,y,t), & (x,y,t) \in \mathcal{D}, \\[8pt]
		
		u(x,y,0)=u_0(x,y) & (x,y) \in \overline{I}, \\[8pt]
		
		u(x,y,t)=0 & (x,y,t) \in \partial I\times(0,T].
	\end{array}\right.
\end{equation}
Here, $0 < \mu \ll 1$ denotes a small perturbation parameter, and $T$ is a positive constant. The advection vector $\boldsymbol{b}(x,y) = (b_1(x,y), b_2(x,y))$ satisfies $b_1(x,y) \ge \beta_x > 0$ and $b_2(x,y) \ge \beta_y > 0$, while the reaction coefficient $a(x,y)=a_{1}(x,y)+a_{2}(x,y)$ satisfies $a_{1}(x,y),a_{2}(x,y)\geq 0$, $\forall (x,y) \in \overline{I}$. Furthermore, we assume that $r$ and $u_0$ are sufficiently smooth to ensure the existence and uniqueness of the solution to problem \eqref{E1}. When the perturbation parameter $\mu$ is very small, the solution $u(x,y,t)$ develops sharp boundary layers near $x = 1$ and $y = 1.$ 


\subsection{ADI operator splitting and problem decomposition}\label{sec(2.1.1)}
We introduce the following linear operators:
\begin{equation*}\label{E2}
	\left\{
	\begin{array} {ll}
		\mathcal{L}_{x,\mu}=-\mu \frac{\partial^{2}}{\partial x^{2}} + b_{1}(x,y)\frac{\partial}{\partial x}  + a_1(x,y),  \forall y \in I_y, \\[8pt]
			\mathcal{L}_{y,\mu}=-\mu \frac{\partial^{2}}{\partial y^{2}} + b_{2}(x,y)\frac{\partial}{\partial y}  + a_2(x,y),  \forall x \in I_x, 
	\end{array}\right.
\end{equation*}
where $	\mathcal{L}_{x,\mu}$ and $		\mathcal{L}_{y,\mu}$ are linear operators in the  $x$- and $y$-directions, respectively. In addition, we also assume that $a_{1} - \frac{1}{2}\frac{\partial b_1}{\partial x}\geq\gamma>0$ and $a_{2} - \frac{1}{2}\frac{\partial b_2}{\partial y}\geq\gamma>0$, for some constant $\gamma$. We further suppose that the source term $r(x,y,t)$ admits a splitting of the form $r= r_{1} + r_{2}$ such that it satisfies the following  boundary compatibility conditions:
\[r_{1}(x,0,t)=r_{2}(0,y,t)=r_{1}(x,1,t)=r_{2}(1,y,t)=0.\]
Under this assumption, problem \eqref{E1} can be equivalently expressed using the directional operators $	\mathcal{L}_{x,\mu}$ and $	\mathcal{L}_{y,\mu}$ as
\[	\frac{\partial u}{\partial t} +(	\mathcal{L}_{x,\mu} +\mathcal{L}_{y,\mu})u = r_{1}(x,y,t)+ r_{2}(x,y,t) \quad \text{on} \quad I \times (0,T].\]
Next, we decomposed the $2$D model problem~\eqref{E1} into two separate $1$D subproblems along the $x$- and $y$-directions, respectively. Thus, we use two $1$D DDG spaces. Time discretization is performed using the backward Euler method. In the temporal discretization, the mesh points on the time domain [0, $T$] are given by
\begin{eqnarray}
	T^{M}=\{t_{m}=m\Delta t,\quad m=0,1,\ldots,M,\quad \Delta t= T/M\}.\nonumber
\end{eqnarray}
The length of equidistant mesh intervals is denoted by $\Delta t=(t_{m}-t_{m-1})$. Using this approach, we now solve the following subproblems on each time subinterval $(t_{m-1}, t_m]$, for $m = 1, \ldots, M$:
\vskip 2mm
\textbf{Subproblem1}
\begin{equation}\label{E3}
	\left\{
	\begin{array} {ll}
		\text{Find} \quad v : I \times (t_{m-1}, t_{m}] \to \mathbb{R} \quad \text{such that} \quad  \forall y \in I_y , \\[8pt] 
		\frac{\partial v}{\partial t} +\mathcal{L}_{x,\mu}v = r_{1}(x,y,t_{m}),  \quad in \quad I \times (t_{m-1}, t_{m}] , \\[8pt]
		v(x,y,t)=0, \quad  (x,y,t) \in \partial I_{x}\times I_{y}\times(t_{m-1}, t_{m}],\\[8pt]
		v(x,y,t_{m-1})=u(x,y,t_{m-1}), \quad (x,y) \in I.
	\end{array}\right.
\end{equation}
In this step, for each fixed $y \in I_y$, the corresponding subproblem is subsequently solved along the $x$-direction to obtain the intermediate solution $v(x,y,t_{m})$. This intermediate solution serves as the initial condition for the next subproblem.
\vskip 2mm
\textbf{Subproblem2}
\begin{equation}\label{E4}
	\left\{
	\begin{array} {ll}
		\text{Find} \quad w : I \times (t_{m-1}, t_{m}] \to \mathbb{R} \quad \text{such that} \quad  \forall x \in I_x , \\[8pt] 
		\frac{\partial w}{\partial t} +\mathcal{L}_{y,\mu}w = r_{2}(x,y,t_{m}),  \quad in \quad I \times (t_{m-1}, t_{m}] , \\[8pt]
		w(x,y,t)=0, \quad  (x,y,t) \in  I_{x}\times \partial I_{y}\times(t_{m-1}, t_{m}],\\[8pt]
		w(x,y,t_{m-1})=v(x,y,t_{m}), \quad (x,y) \in I.
	\end{array}\right.
\end{equation}
In this step, for each fixed $x \in I_x$, the corresponding subproblem is subsequently solved along the $y$-direction. The coupling between Subproblem~\eqref{E3} and Subproblem~\eqref{E4} is established through the initial condition, where the intermediate solution $w(x,y,t_{m})$ is used to advance the solution to the next time level.

\section{A priori bounds and domain discretization}\label{sec(2.2)}
In this section, we derive estimates for the smooth and boundary layer components of the solution, along with their corresponding derivatives. We also introduce the layer-adapted mesh employed in our analysis. Finally, we summarize the basic notation used throughout the paper.

\subsection{A priori bounds for the continuous solution}\label{sec(2.2.1)}
In this subsection, we present a lemma that provides bounds for the solution of \eqref{E1} and its derivatives. The exact solution $u(x,y,t)$ of the IBVP \eqref{E1} is decomposed into a smooth component $S$ and a singular component $E$ as
\[u = S + E.\]
Assuming sufficient regularity of the solutions $v$ of \eqref{E3} and $w$ of \eqref{E4}, we derive estimates for these functions for all integers $0 \leq i \leq k+2$ and $0 \leq i + j \leq k+2$. Then, following the technique in \cite{boud_solution}, we obtain
\begin{eqnarray}
	&&\left|\frac{\partial^{i + j}v}{\partial x^{i}\partial t^{j}}\right|  
	\leq C\left(1+\mu^{-i}\exp\left(\frac{-\beta_{x}(1-x)}{\mu}\right)\right), 
	\quad \forall y \in I_{y}, \nonumber\\
	&&\left|\frac{\partial^{i + j}w}{\partial y^{i}\partial t^{j}}\right|  
	\leq C\left(1+\mu^{-i}\exp\left(\frac{-\beta_{y}(1-y)}{\mu}\right)\right), 
	\quad \forall x \in I_{x}. \nonumber
\end{eqnarray}
\begin{lemma}\label{lemma1}
	Assume that the functions $r_{1}, r_{2}, \beta_x,$ and $\beta_y$ are sufficiently smooth. Then, the solutions $v$ and $w$ admit the decompositions  $v = S^{*} + E^{*}, \quad w = S^\dagger + E^\dagger$,	where $S^{*}, S^\dagger$ denote the smooth components and $E^{*}, E^\dagger$ denote the layer components. Then, $\forall$ integers $0 \leq i \leq k+2$ and $0 \leq i + j \leq k+2$, the following bounds hold:
	\begin{eqnarray}
		&&\left\{
		\begin{array}{ll}
			\left|\dfrac{\partial^{i+j}S^{*}}{\partial x^{i}\partial t^{j}}\right|  
			\leq C\left(1+\mu^{k+1-i}\right), \\[6pt]
			\left|\dfrac{\partial^{i+j}E^{*}}{\partial x^{i}\partial t^{j}}\right|  
			\leq C\left(\mu^{-i}\exp\left(\dfrac{-\beta_{x}(1-x)}{\mu}\right)\right), 
			\quad \forall y \in I_{y},
		\end{array}
		\right. \nonumber \\[10pt]
		&&\left\{
		\begin{array}{ll}
			\left|\dfrac{\partial^{i+j}S^\dagger}{\partial y^{i}\partial t^{j}}\right|  
			\leq C\left(1+\mu^{k+1-i}\right), \\[6pt]
			\left|\dfrac{\partial^{i+j}E^\dagger}{\partial y^{i}\partial t^{j}}\right|  
			\leq C\left(\mu^{-i}\exp\left(\dfrac{-\beta_{y}(1-y)}{\mu}\right)\right), 
			\quad \forall x \in I_{x}.
		\end{array}
		\right. \nonumber
	\end{eqnarray}
\end{lemma}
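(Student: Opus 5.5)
The plan is the classical Shishkin-type decomposition, carried out for each fixed $y$ (for $v$) and each fixed $x$ (for $w$). Each subproblem is then a one-dimensional convection–diffusion parabolic problem with outflow layer at $x=1$ (resp. $y=1$), as in \cite{boud_solution}. We treat $v$; the argument for $w$ is identical after exchanging the roles of $x,y$ and $b_1,\beta_x$ with $b_2,\beta_y$.

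\textbf{Smooth component.} Define $S^{*}$ through a truncated asymptotic expansion
\[
S^{*}=S_0+\mu S_1+\dots+\mu^{k+1}S_{k+1}.
\]
Here $S_0$ solves the reduced first-order hyperbolic problem $\partial_t S_0+b_1\partial_x S_0+a_1S_0=r_1$ with inflow data $S_0(0,y,t)=0$ and the given initial data. For $1\le l\le k$, $S_l$ solves $\partial_t S_l+b_1\partial_x S_l+a_1S_l=\partial_x^2 S_{l-1}$, again with homogeneous inflow data. The last term $S_{k+1}$ solves the full problem
\[
\partial_t S_{k+1}+\mathcal{L}_{x,\mu}S_{k+1}=\partial_x^2S_k
\]
with suitable boundary values at $x=0,1$. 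Since $b_1\ge\beta_x>0$, each reduced problem is well posed along characteristics. Given sufficient smoothness of $r_1$ and compatibility of the data, the functions $S_0,\dots,S_k$ have derivatives bounded independently of $\mu$. For $S_{k+1}$, a maximum principle for $\partial_t+\mathcal{L}_{x,\mu}$ together with the standard derivative estimates for one-dimensional parabolic convection–diffusion problems gives
\[
|\partial_x^i\partial_t^j S_{k+1}|\le C(1+\mu^{-i}).
\]
Multiplying by $\mu^{k+1}$ and adding the bounded lower-order terms yields $|\partial_x^i\partial_t^jS^{*}|\le C(1+\mu^{k+1-i})$, which is the stated bound.

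\textbf{Layer component.} Set $E^{*}=v-S^{*}$. It satisfies the homogeneous equation $\partial_tE^{*}+\mathcal{L}_{x,\mu}E^{*}=0$, with zero initial data and $E^{*}(0,y,t)=0$ (up to the choice of the boundary values of $S_{k+1}$), and $E^{*}(1,y,t)=-S^{*}(1,y,t)=O(1)$. Consider the barrier
\[
\psi(x)=C\exp\!\left(-\beta_x(1-x)/\mu\right).
\]
A direct computation gives
\[
(\partial_t+\mathcal{L}_{x,\mu})\psi=\mu^{-1}\beta_x(b_1-\beta_x)\psi+a_1\psi\ge0.
\]
The comparison principle then yields $|E^{*}|\le\psi$, which is the case $i=j=0$. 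Time derivatives are handled the same way: $\partial_t^jE^{*}$ satisfies the same homogeneous equation, with boundary data $\partial_t^jS^{*}(1,\cdot)$ and initial data bounded through compatibility. For $x$-derivatives, use the standard argument of \cite{boud_solution}. First bound $\mu\,\partial_xE^{*}$ by integrating the equation from $x$ to $1$ and applying a mean-value argument on intervals of length $\mu$. Then obtain higher $\partial_x^i$ by induction, differentiating the equation in $x$ and noting that $\partial_x^iE^{*}$ solves a problem of the same type with right-hand side controlled by lower derivatives. This gives $\mu^{-i}\exp(-\beta_x(1-x)/\mu)$.

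\textbf{Main obstacle.} The delicate point is the sharp derivative bounds of the remainder $S_{k+1}$ and of $\partial_x^iE^{*}$ up to order $k+2$, uniformly in $t$. These require enough compatibility conditions at the corners $x\in\{0,1\}$, $t=0$ for the data $r_1$ and $u(\cdot,\cdot,t_{m-1})$. I would assume these, as the lemma's smoothness hypothesis implicitly does. I would then cite the one-dimensional parabolic estimates of \cite{boud_solution} rather than reprove them, and stress only that $y$ (resp. $x$) enters as a smooth parameter, so all constants are uniform in it.
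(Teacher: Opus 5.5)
Your strategy is the technique the paper defers to by citing \cite{clavero_bound} and \cite{gowrisankar_bound}: a truncated expansion $S^{*}=S_0+\dots+\mu^{k+1}S_{k+1}$ for the regular part, the barrier $C\exp(-\beta_x(1-x)/\mu)$ with the comparison principle, and the ODE/mean-value argument for $\partial_x^iE^{*}$. The algebra of your expansion and the barrier computation are correct. As you set it up, however, there is a genuine gap in how you treat the initial data of the split subproblems. You give $S_0$ the full initial datum, so $E^{*}=v-S^{*}$ has zero initial data and $S^{*}(\cdot,y,t_{m-1})=u(\cdot,y,t_{m-1})$. For $m\ge 2$ this datum is the exact solution of \eqref{E1} at a positive time. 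It already carries an $O(1)$-amplitude layer at $x=1$; in the paper's test problems this is the factor $\exp(-(1-x)/\mu)$. Hence $\partial_xS^{*}(1,y,t_{m-1})$ is of order $\mu^{-1}$, contradicting the bound $C(1+\mu^{k})$ you want for $i=1$. The reduced operator $\partial_t+b_1\partial_x+a_1$ only transports this profile, so $S_0$ keeps $x$-derivatives of size $\mu^{-i}$ near $x=1$ for $t-t_{m-1}=O(\mu)$. Your claim that $S_0,\dots,S_k$ have $\mu$-uniform derivative bounds is therefore false. The same happens for $w$: its datum $v(\cdot,\cdot,t_m)$ inherits the $y$-layer of $u(\cdot,\cdot,t_{m-1})$, which the $x$-sweep does not remove. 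Only for $m=1$, with smooth $u_0$, does your construction work as written.

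The missing step is to split the initial datum itself, either inductively in $m$ or via a Shishkin-type decomposition of the two-dimensional solution. Write $u(\cdot,\cdot,t_{m-1})=\bar S+\bar E$, where $\bar S$ has $\mu$-uniformly bounded $x$-derivatives (the $y$-layer may stay in $\bar S$) and $|\partial_x^i\bar E|\le C\mu^{-i}\exp(-\beta_x(1-x)/\mu)$. Then start $S_0$ from $\bar S$ and $E^{*}$ from $\bar E$. The barrier still dominates $E^{*}$ at $t=t_{m-1}$, so the case $i=j=0$ survives. For $j\ge1$, though, the initial value of $\partial_t^jE^{*}$ is $(-\mathcal{L}_{x,\mu})^j\bar E$, and the bounds on $\bar E$ alone only control this by $C\mu^{-j}\exp(-\beta_x(1-x)/\mu)$. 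To reach the $\mu$-independent bound claimed for $\partial_t^jE^{*}$, you need $\mathcal{L}_{x,\mu}\bar E=O(\exp(-\beta_x(1-x)/\mu))$. That is information from the equation satisfied by the layer part of $u$, and it must be carried through both sweeps.

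Separately, your assertion that the required compatibility is implied by the smoothness hypothesis is not right. Since $u=0$ on $x=0$ and $r_2(0,y,t)=0$, one has $\mathcal{L}_{x,\mu}u(0,y,t_{m-1})=r_1(0,y,t_{m-1})$. So the first-order corner condition for \eqref{E3} at $(0,t_{m-1})$ reads $r_1(0,y,t_m)=r_1(0,y,t_{m-1})$. This fails generically, including in the paper's own examples, where $r_1(0,y,t)$ is proportional to $1-e^{-t}$. Without it, derivatives such as $\partial_t^2v$ and $\partial_x^3v$ are unbounded near that corner. It must therefore be stated as an explicit extra hypothesis rather than assumed tacitly.
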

\begin{proof}
	The proof follows from the technique presented in \cite{clavero_bound} and \cite{gowrisankar_bound}.
\end{proof}
\subsection{Spatial domain discretization}\label{sec(2.2.2)}
The spatial domain is discretized using a piecewise-uniform Shishkin mesh. Let $I = I_x \times I_y$, where $I_x = (0,1)$ and $I_y = (0,1)$. The resulting two-dimensional mesh is constructed as the tensor product of two one-dimensional Shishkin meshes. Since the problem \eqref{E1} exhibits boundary layers near $x = 1$ and $y = 1$, we employ piecewise-uniform meshes on both $I_x$ and $I_y$. For simplicity, we use the same number $\mathcal{N}$ of subintervals in both domains $I_x$ and $I_y$. The mesh transition parameters are defined by
\[
\sigma_{j} = \min\left\{\frac{1}{2}, \frac{(k+1)}{\beta_j}\mu \ln \mathcal{\mathcal{N}} \right\}, \quad j \in \{x,y\}.
\]
In practice, we take $\sigma_{j} = \frac{(k+1)}{\beta_j}\mu \ln \mathcal{\mathcal{N}}$ whenever $\frac{(k+1)}{\beta_j}\mu \ln \mathcal{\mathcal{N}} \leq \tfrac{1}{2}$; otherwise, the mesh reduces to a uniform mesh. The intervals are decomposed as $I_x = I_{x1} \cup I_{x2}, \quad 
I_y = I_{y1} \cup I_{y2},$
where
\[
I_{x1} = [0, 1-\sigma_x], \quad I_{x2} = [1-\sigma_x, 1], \quad
I_{y1} = [0, 1-\sigma_y], \quad I_{y2} = [1-\sigma_y, 1].
\] Each subinterval is uniformly partitioned into $\mathcal{N}/2$ mesh intervals. Accordingly, the mesh points in the $x$-direction is given by $I_x = \{0 = x_0 < x_1 < \cdots < x_{\mathcal{N}/2} = 1-\sigma_x < x_{\mathcal{N}/2+1} < \cdots < x_{\mathcal{N}} = 1\},$ and the corresponding mesh elements are defined by $\mathcal{K}_{x_{i}} = (x_{i-1}, x_{i})$ for $i = 1,\ldots,\mathcal{N}$. Similarly, in the $y$-direction, $I_y = \{0 = y_0 < y_1 < \cdots < y_{\mathcal{N}/2} = 1-\sigma_y < y_{\mathcal{N}/2+1} < \cdots < y_{\mathcal{N}} = 1\},$ with mesh elements $\mathcal{K}_{y_{j}} = (y_{j-1}, y_{j})$ for $j = 1,\ldots,\mathcal{N}$. The mesh step sizes in the $x$- and $y$-directions are defined as $h_i = x_{i} - x_{i-1}, \quad h_j = y_{j} - y_{j-1},$  respectively.

In the $x$-direction, the mesh sizes satisfy
\[
h_i =
\begin{cases}
	\dfrac{2(1-\sigma_x)}{\mathcal{N}}, & i = 1, 2, \ldots, \dfrac{\mathcal{N}}{2}, \\[6pt]
	\dfrac{2\sigma_x}{\mathcal{N}}, & i= \dfrac{\mathcal{N}}{2}+1,\dots, \mathcal{N}.
\end{cases}
\]
Consequently, the following bounds hold:
\[
h_i \leq
\begin{cases}
	C \mathcal{N}^{-1}, & i = 1, 2, \ldots, \dfrac{\mathcal{N}}{2}, \\[6pt]
	C (\mu \mathcal{N}^{-1} \ln \mathcal{N}), &  i= \dfrac{\mathcal{N}}{2}+1,\dots, \mathcal{N}.
\end{cases}
\]
An analogous construction applies to $I_y$, and similar bounds hold for the mesh sizes $h_j$ in the $y$-direction.
\subsection{Preliminaries and notation for the discontinuous Galerkin method}\label{sec(2.2.3)}
In this subsection, we introduce the notation that will be used throughout the paper.

Let $\mathcal{T}_{\mathcal{N}}^x = \{\mathcal{K}_{x_{i}} = (x_{i-1}, x_{i}) : i = 1, \ldots, \mathcal{N}\}$ and $\mathcal{T}_{\mathcal{N}}^y = \{\mathcal{K}_{y_{j}} = (y_{j-1}, y_{j}) : j = 1, \ldots, \mathcal{N}\}$ denote partitions of the spatial domains $I_x$ and $I_y$, respectively. We denote by $\mathcal{T}_N$ the corresponding tensor-product partition of $I = I_x \times I_y$.

The $L^{2}$-inner product and norm on $I$ are defined elementwise by $\langle \tilde{v}, \tilde{w} \rangle_{I} = \sum_{\mathcal{K} \in \mathcal{T}_N} \langle \tilde{v}, \tilde{w} \rangle_{\mathcal{K}},
\quad
\|\tilde{v}\|_{I}^2 = \sum_{\mathcal{K} \in \mathcal{T}_N} \|\tilde{v}\|_{\mathcal{K}}^2,$ where $\langle \tilde{v}, \tilde{w} \rangle_{\mathcal{K}} = \int_{\mathcal{K}} \tilde{v} \tilde{w} \, dx \, dy.$ Similarly, in the $x$- and $y$-directions, we define $
\langle v, w \rangle_{I_x} = \sum_{i=1}^{\mathcal{N}} \langle \tilde{v}, \tilde{w} \rangle_{\mathcal{K}_{x_{i}}},
\
\|\tilde{v}\|_{I_x}^2 = \sum_{i=1}^{\mathcal{N}} \|\tilde{v}\|_{\mathcal{K}_{x_{i}}}^2, \ \langle \tilde{v}, \tilde{w} \rangle_{I_y} = \sum_{j=1}^{\mathcal{N}} \langle \tilde{v}, \tilde{w} \rangle_{\mathcal{K}_{y_{j}}},
\
\|\tilde{v}\|_{I_y}^2 = \sum_{j=1}^{\mathcal{N}} \|\tilde{v}\|_{\mathcal{K}_{y_{j}}}^2, \sum_{i=1}^{\mathcal{N}}\|\tilde{v}\|_{n,\infty,\mathcal{K}_{x_i}}=\sum_{i=1}^{\mathcal{N}}\|(\tilde{v})^{n}\|_{\infty,\mathcal{K}_{x_i}},$\ and $\sum_{j=1}^{\mathcal{N}}\|\tilde{v}\|_{n,\infty,\mathcal{K}_{y_j}}=\sum_{j=1}^{\mathcal{N}}\|(\tilde{v})^{n}\|_{\infty,\mathcal{K}_{y_j}}.$

Associated with these partitions, we define the broken Sobolev spaces of order $s$ as
\[
H^s(I_x, \mathcal{T}_{\mathcal{N}}^x) = \left\{ \tilde{v} \in L^{2}(I_x) : \tilde{v}|_{\mathcal{K}_{x_{i}}} \in H^s(\mathcal{K}_{x_{i}}), \ \forall \mathcal{K}_{x_{i}} \in \mathcal{T}_{\mathcal{N}}^x \right\},
\]
\[
H^s(I_y, \mathcal{T}_{\mathcal{N}}^y) = \left\{ \tilde{v} \in L^{2}(I_y) : \tilde{v}|_{\mathcal{K}_{y_{j}}} \in H^s(\mathcal{K}_{y_{j}}), \ \forall \mathcal{K}_{y_{j}} \in \mathcal{T}_{\mathcal{N}}^y \right\}.
\]
Based on these meshes, we define the finite element spaces in the $x$- and $y$-directions as
\[
\mathcal{W}_{\mathcal{N}}^k(I_x) = \left\{ \tilde{v} \in L^{2}(I_x) : \tilde{v}|_{\mathcal{K}_{x_{i}}} \in \mathbb{P}^k(\mathcal{\mathcal{K}}_{x_{i}}), \ \forall \mathcal{K}_{x_{i}} \in \mathcal{T}_{\mathcal{N}}^x \right\},
\]
\[
\mathcal{W}_{\mathcal{N}}^k(I_y) = \left\{ \tilde{v} \in L^{2}(I_y) : \tilde{v}|_{\mathcal{K}_{y_{j}}} \in \mathbb{P}^k(\mathcal{K}_{y_{j}}), \ \forall \mathcal{K}_{y_{j}} \in \mathcal{T}_{\mathcal{N}}^y\right\},
\]
where $\mathbb{P}^k(\mathcal{K})$ denotes the space of polynomials of degree at most $k$ on an element $\mathcal{K}$.

We define the jump and average operators separately in the $x$- and $y$-directions.
\vskip2mm
In the $x$-direction, for an interior node $x_{i}$, we define
$[\tilde{w}]_{x_{i}}=[\tilde{w}(x_{i})]=\tilde{w}^{+}(x_{i})-\tilde{w}^{-}(x_{i}), \ \{\tilde{w}\}_{x_{i}}=\{\tilde{w}(x_{i})\}=\frac{1}{2}(\tilde{w}^{+}(x_{i})+\tilde{w}^{-}(x_{i})),\ i=1,\ldots,\mathcal{N}-1,$

and at boundary points $[\tilde{w}(x_{0})]=\tilde{w}(x_{0}),$ $\{\tilde{w}(x_{0})\}=\tilde{w}(x_{0}),$ $[\tilde{w}(x_{\mathcal{N}})]=-\tilde{w}(x_{\mathcal{N}}),\, \{\tilde{w}(x_{\mathcal{N}})\}=\tilde{w}(x_{\mathcal{N}}),$
where $\tilde{w}(x^{+}_{i})=\lim_{x\to x^{+}_{i}}\tilde{w}(x)$ and  $ \tilde{w}(x_{i}^{-})=\lim_{x\to x^{-}_{i}}\tilde{w}(x).$
Similarly, in the $y$-direction, the jump and average operators at an interior node $y_{j}$ are defined in an analogous manner.
\section{Discontinuous Galerkin approximation}\label{sec(2.3)}
In this section, we develop the fully discrete DDG scheme by combining the DDG method for spatial discretization with the backward Euler method for temporal discretization. In addition, we rigorously analyze the stability of the scheme and establish its  coercivity properties.
\subsection{Fully-discrete formulation}\label{sec(2.3.1)}
In this subsection, we first formulate the semi-discrete problems corresponding to \eqref{E3} and \eqref{E4}. At each time step $t=t_{m}$, $1 \leq m \leq M$, we obtain:
\vskip 2mm
\textbf{Step 1 ($x$- direction)}
\begin{equation}\label{E5}
	\left\{
	\begin{array} {ll}
		\text{Find} \quad v^{m} : I\times(t_{m-1},t_{m}]  \to \mathbb{R} \quad \text{so that} \quad  \forall y \in I_y , \\[8pt] 
		\frac{\partial v^{m}}{\partial t} +\mathcal{L}_{x,\mu}v^{m} = r_{1}(x,y,t_{m}),  \quad in \quad I, \\[8pt]
		v^{m}(x,y)=0, \quad  (x,y) \in \partial I_{x}\times I_{y},\\[8pt]
		v^{m-1}(x,y)=u^{m-1}(x,y), \quad (x,y) \in I.
	\end{array}\right.
\end{equation}
\vskip 2mm
\begin{equation}\label{E6}
	\left\{
	\begin{array} {ll}
		\text{Find} \quad v^{m} : I   \to \mathbb{R} \quad \text{so that} \quad  \forall y \in I_y , \\[8pt] 
		\bigg(\frac{v^{m}-v^{m-1}}{\Delta t}\bigg) +\mathcal{L}_{x,\mu}v^{m} = r_{1}(x,y,t_{m}),  \quad in \quad I, \\[8pt]
		v^{m}(x,y)=0, \quad  (x,y) \in \partial I_{x}\times I_{y},\\[8pt]
		v^{m-1}(x,y)=u^{m-1}(x,y), \quad (x,y) \in I.
	\end{array}\right.
\end{equation}
\vskip 2mm
\textbf{Step 2 ($y$- direction)}
\begin{equation}\label{E7}
	\left\{
	\begin{array} {ll}
		\text{Find} \quad w^{m} : I \times(t_{m-1},t_{m}]  \to \mathbb{R} \quad \text{so that} \quad  \forall x \in I_x , \\[8pt] 
		\frac{\partial w^{m}}{\partial t} +\mathcal{L}_{y,\mu}w^{m} = r_{2}(x,y,t_{m}),  \quad in \quad I, \\[8pt]
		w^{m}(x,y)=0, \quad  (x,y) \in  I_{x}\times\partial I_{y},\\[8pt]
		w^{m-1}(x,y)=v^{m}(x,y), \quad (x,y) \in I.
	\end{array}\right.
\end{equation}
\vskip 2mm
\begin{equation}\label{E8}
	\left\{
	\begin{array} {ll}
		\text{Find} \quad w^{m} : I  \to \mathbb{R} \quad \text{so that} \quad  \forall x \in I_x , \\[8pt] 
		\bigg(\frac{w^{m}-w^{m-1}}{\Delta t}\bigg)+\mathcal{L}_{y,\mu}w^{m} = r_{2}(x,y,t_{m}),  \quad in \quad I, \\[8pt]
		w^{m}(x,y)=0, \quad  (x,y) \in  I_{x}\times\partial I_{y},\\[8pt]
		w^{m-1}(x,y)=v^{m}(x,y), \quad (x,y) \in I.
	\end{array}\right.
\end{equation}
Applying the DDG method to \eqref{E6} and \eqref{E8} yields the following fully discrete scheme.
\vskip 2mm
\textbf{Step 1 ($x$- direction)}
\begin{equation}\label{E10}
	\left\{
	\begin{array} {ll}
		\text{Find} \quad v_{h}^{m}(x,y_{j})\in \mathcal{W}_{\mathcal{N}}^k(I_x) \quad  \forall y_{j}  \in \overline{I}_y, \qquad \mbox{such that}\\[8pt]
		\bigg(\frac{v^{m}_h-v^{m-1}_h}{\Delta t},\phi_h\bigg)+B_{x}(v^{m}_{h},\phi_h)=(r^{m}_{1},\phi_h) \quad \forall \phi_h \in \mathcal{W}_{\mathcal{N}}^k(I_x),  \\[8pt] 
	\end{array}\right.
\end{equation}
where 
\begin{eqnarray}
&&B_{x}(v_{h}^{m},\phi_h)=\mu\sum_{i=1}^{\mathcal{N}}\int_{\mathcal{K}_{x_{i}}}\frac{\partial{v_{h}^{m}}}{\partial{x}}\frac{\partial{\phi_h}}{\partial{x}}dx-\sum_{i=1}^{\mathcal{N}}\int_{\mathcal{K}_{x_{i}}}b_{1}(x,y_{j})v_{h}^{m}\frac{\partial{\phi_h}}{\partial{x}}dx\nonumber\\&&+\sum_{i=1}^{\mathcal{N}}\int_{\mathcal{K}_{x_{i}}}\bigg(a_{1}(x,y_{j})-\frac{\partial b_{1}(x,y_{j})}{\partial x}\bigg)v_{h}^{m}\phi_hdx\nonumber\\&&+\mu\sum_{i=0}^{\mathcal{N}}\bigg(\widehat{v_{h}^{m}}|_{i}[\phi_h]_{i}+[v_{h}^{m}]_{i}\bigg\{\frac{\partial{\phi_h}}{\partial{x}}\bigg\}_{i}\bigg)-\sum_{i=0}^{\mathcal{N}}\big(b_{1}(x,y_{j})\widetilde{v_{h}^{m}}[\phi_h]\big)_{i},\nonumber
\end{eqnarray}
\textbf{Step 2 ($y$-direction)}
\begin{equation}\label{E12}
	\left\{
	\begin{array} {ll}
		\text{Find} \quad w_{h}^{m}(x_{i},y)\in \mathcal{W}_{\mathcal{N}}^k(I_y) \quad  \forall x_{i}  \in \overline{I}_x, \qquad \mbox{such that}\\[8pt]
		\bigg(\frac{w^{m}_h-w^{m-1}_h}{\Delta t},\psi_h\bigg)+B_{y}(w^{m}_{h},\psi_h)=(r^{m}_{2},\psi_h) \quad \forall \psi_h \in \mathcal{W}_{\mathcal{N}}^k(I_y),  \\[8pt] 
	\end{array}\right.
\end{equation}
where
\begin{eqnarray}
	&&B_{y}(w^{m}_{h},\psi_h)=\mu\sum_{j=1}^{\mathcal{N}}\int_{\mathcal{K}_{y_{j}}}\frac{\partial{w_{h}^{m}}}{\partial{y}}\frac{\partial{\psi_h}}{\partial{y}}dy-\sum_{j=1}^{\mathcal{N}}\int_{\mathcal{K}_{y_{j}}}b_{2}(x_{i},y)w_{h}^{m}\frac{\partial{\psi_h}}{\partial{y}}dy\nonumber\\&&+\sum_{j=1}^{\mathcal{N}}\int_{\mathcal{K}_{y_{j}}}\bigg(a_{2}(x_i,y)-\frac{\partial b_{2}(x_i,y)}{\partial y}\bigg)w_{h}^{m}\psi_hdy\nonumber\\&&+\mu\sum_{j=0}^{\mathcal{N}}\bigg(\widehat{w_{h}^{m}}|_{j}[\psi_h]_{j}+[w_{h}^{m}]_{j}\bigg\{\frac{\partial{\psi_h}}{\partial{y}}\bigg\}_{j}\bigg)-\sum_{j=0}^{\mathcal{N}}\big(b_{2}(x_{i},y)\widetilde{w_{h}^{m}}[\psi_h]\big)_{j},\nonumber
\end{eqnarray}
where the additional terms $[v_{h}^{m}]_{j}$ and $[w_{h}^{m}]_{j}$ are interface correction terms, which are essential for achieving optimal accuracy of the DDG method when higher-order polynomial approximations are employed. Following \cite{liu2009direct}, the numerical fluxes $\widehat{v^m}$ and $\widehat{w^m}$ are defined, respectively, as
\begin{equation}\label{flux1}
	\widehat{v^{m}}\big|_{i}= \left\{
	\begin{array}{ll}
		\displaystyle

		\eta_{0}\frac{[v^{m}]_{i}}{h_{i}}
		+\bigg\{\frac{\partial v^{m}}{\partial x}\bigg\}_{i}
		+\eta_{1}h_{i}\bigg[\frac{\partial^{2} v^{m}}{\partial x^{2}}\bigg]_{i},
		& i=1,\ldots,\mathcal{N}-1,\\[10pt]
		
		\frac{\partial v^{m}_{0}}{\partial x}, & i=0,\\[6pt]
		
		\frac{\partial v^{m}_{\mathcal{N}}}{\partial x}, & i=\mathcal{N}.
	\end{array}
	\right.
\end{equation}
\begin{equation}\label{flux2}
	\widehat{w^{m}}\big|_{j}= \left\{
	\begin{array}{ll}
		\displaystyle

		\eta_{0}\frac{[w^{m}]_{j}}{h_{j}}
		+\bigg\{\frac{\partial w^{m}}{\partial y}\bigg\}_{j}
		+\eta_{1}h_{j}\bigg[\frac{\partial^{2} w^{m}}{\partial y^{2}}\bigg]_{j},
		& j=1,\ldots,\mathcal{N}-1,\\[10pt]
		
		\frac{\partial w^{m}_{0}}{\partial y}, & j=0,\\[6pt]
		
		\frac{\partial w^{m}_{\mathcal{N}}}{\partial y}, & j=\mathcal{N}.
	\end{array}
	\right.
\end{equation}
where $\eta_0$ and $\eta_1$ are positive user-defined parameters that ensure the stability of the DDG method. We set $\eta_{1}=\frac{1}{2k^{2}+2k}$, as given in {\cite[Lemma $3.2$]{Cao_DDG}}.

The numerical fluxes $\widetilde{u^{m}}\big|_{i}$ and $\widetilde{w^{m}}\big|_{j}$ are defined as
\begin{equation}\label{flux3}
	\widetilde{v^{m}}\big|_{i}=
	\left\{
	\begin{array}{ll}
		\theta\,v^{m}(x_i^-)+(1-\theta)\,v^{m}(x_i^+),
		& i=1,\ldots,\mathcal{N}-1,\\[8pt]
		0,
		& i=0,\\[6pt]
		v^{m}_{\mathcal{N}},
		& i=\mathcal{N},
	\end{array}
	\right.
\end{equation}

\begin{equation}\label{flux4}
	\widetilde{w^{m}}\big|_{j}=
	\left\{
	\begin{array}{ll}
		\theta\,w^{m}(y_j^-)+(1-\theta)\,w^{m}(y_j^+),
		& j=1,\ldots,\mathcal{N}-1,\\[8pt]
		0,
		& j=0,\\[6pt]
		w^{m}_{\mathcal{N}},
		& j=\mathcal{N},
	\end{array}
	\right.
\end{equation}
where we take $ \frac{1}{2}\leq\theta\leq 1$ when
$b_{1}(x,y_j)>0$ and $b_{2}(x_i,y)>0$.
\subsection{Admissibility of the DDG method}\label{sec(2.3.2)}
As is standard in DG methods, the numerical flux is selected to ensure stability. The stability of the proposed DDG method is established through the following definition, which is taken from \cite[Definition $2.1$ ]{liu2009direct}.
\begin{definition}\label{def:admissible}
	The numerical fluxes $\widehat{v^{m}}$ and $\widehat{w^{m}}$ of the forms~\eqref{flux1} and~\eqref{flux2} are said to be admissible if there exist constants $\gamma_1 \in (0,1)$ and $\gamma_2 \in (0,1]$ such that
	\begin{eqnarray*}
		\gamma_1 \sum_{i=1}^{\mathcal{N}}\int_{\mathcal{K}_{x_i}} ((v^{m})')^2\,dx
		+\sum_{i=0}^{\mathcal{N}}\left(\widehat{v^{m}}[v^{m}]+[v^{m}]\{(v^{m})'\}\right)\Big|_{i}
		\ge
		\gamma_2\sum_{i=0}^{\mathcal{N}}\frac{[v^{m}]_i^{\,2}}{h_i}, \forall\, v^{m}\in \mathcal{W}_{\mathcal{N}}^{k}(I_{x}).
	\end{eqnarray*}
	\begin{equation*}
		\gamma_1 \sum_{j=1}^{\mathcal{N}}\int_{\mathcal{K}_{y_j}} ((w^{m})')^2\,dy
		+\sum_{j=0}^{\mathcal{N}}\left(\widehat{w^{m}}[w^{m}]+[w^{m}]\{(w^{m})'\}\right)\Big|_{j}
		\ge
		\gamma_2\sum_{j=0}^{\mathcal{N}}\frac{[w^{m}]_j^{\,2}}{h_j},
		\ \forall\, w^{m}\in \mathcal{W}_{\mathcal{N}}^{k}(I_{y}).
	\end{equation*}
\end{definition}
\begin{lemma}\label{lemma2}
	Suppose that there exist constants $\gamma_1 \in (0,1)$ and $\gamma_2 \in (0,1]$ such that
	\begin{equation*}
		\eta_0
		\ge
		\gamma_2
		+\frac{1}{\gamma_1}
		\max_{v^{m}\in \mathcal{W}_{\mathcal{N}}^{k}(I_{x})}
		\frac{
			\displaystyle
			\sum_{i=0}^{\mathcal{N}}
			h_i
			\left(
			\{(v^{m})'\}_i
			+\frac{\eta_1}{2}h_i[(v^{m})'']_i
			\right)^2
		}{
			\displaystyle
			\sum_{i=1}^{\mathcal{N}}
			\int_{\mathcal{K}_{x_i}}((v^{m})')^2\,\mathrm{d}x
		}.
	\end{equation*}
	An analogous condition holds for the $y$-direction over $\mathcal{W}_{\mathcal{N}}^{k}(I_{y})$. Then the numerical fluxes $\widehat{v^{m}}$ and $\widehat{w^{m}}$ defined by~\eqref{flux1} and~\eqref{flux2} are admissible in the $x$- and $y$-directions, respectively.
\end{lemma}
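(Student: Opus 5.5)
Since the two directions are handled identically, we carry out the argument only for $v^{m}\in\mathcal{W}_{\mathcal{N}}^{k}(I_x)$; the $y$-direction is the same with $h_i,x_i$ replaced by $h_j,y_j$. The plan is to substitute the flux \eqref{flux1} into the interface sum of Definition~\ref{def:admissible} and then absorb the indefinite cross term with a weighted Young inequality.

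\textbf{Interior nodes.} For $i=1,\ldots,\mathcal{N}-1$, inserting \eqref{flux1} gives
\[
\widehat{v^{m}}[v^{m}]+[v^{m}]\{(v^{m})'\}
=\eta_0\frac{[v^{m}]_i^2}{h_i}+2[v^{m}]_i\Big(\{(v^{m})'\}_i+\frac{\eta_1}{2}h_i[(v^{m})'']_i\Big).
\]
The first term has the desired sign. The second is a product of a jump and a derivative average, which is the quantity appearing in the hypothesis of Lemma~\ref{lemma2}.

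\textbf{Boundary nodes.} At $i=0$ and $i=\mathcal{N}$ the flux is the one-sided derivative and the jump is $\pm v^{m}$, so the term reduces to $2[v^{m}]_i\{(v^{m})'\}_i$ with no $\eta_0$ penalty. For the argument to work uniformly over all $i=0,\ldots,\mathcal{N}$, as the hypothesis suggests, the boundary flux must be read as also carrying the term $\eta_0[v^{m}]_i/h_i$, with $[(v^{m})'']_i=0$ there. This is the standard Dirichlet-boundary convention of \cite{liu2009direct}. I would state this explicitly in the proof, because without the boundary penalty the inequality cannot hold. Checking this boundary convention is the main point of care; the rest is routine.

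\textbf{Young inequality.} With the convention above, each node contributes $\eta_0 [v^{m}]_i^2/h_i + 2[v^{m}]_i A_i$, where $A_i=\{(v^{m})'\}_i+\frac{\eta_1}{2}h_i[(v^{m})'']_i$. Applying $2ab\ge -\epsilon a^2-\epsilon^{-1}b^2$ with $a=[v^{m}]_i/\sqrt{h_i}$, $b=\sqrt{h_i}A_i$ and $\epsilon=1$ (any $\epsilon$ works after rescaling) gives
\[
2[v^{m}]_iA_i\ge -\frac{1}{\gamma_1}\,h_iA_i^2-\gamma_1\frac{[v^{m}]_i^2}{h_i}.
\]
A cleaner option is to use the form $2ab\ge -\delta^{-1} a^2-\delta b^2$ with $\delta=\gamma_1/\Lambda$, where
\[
\Lambda=\max_{v^{m}}\frac{\sum_i h_iA_i^2}{\sum_i\int_{\mathcal{K}_{x_i}}((v^{m})')^2\,dx}.
\]
This choice yields $\sum_i 2[v^{m}]_iA_i\ge -\gamma_1\sum_i\int((v^{m})')^2\,dx-(\Lambda/\gamma_1)\sum_i [v^{m}]_i^2/h_i$.

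\textbf{Conclusion.} Summing the interior and boundary contributions and adding $\gamma_1\sum_i\int_{\mathcal{K}_{x_i}}((v^{m})')^2\,dx$, the left-hand side of Definition~\ref{def:admissible} is bounded below by $(\eta_0-\Lambda/\gamma_1)\sum_i [v^{m}]_i^2/h_i$. The assumption $\eta_0\ge\gamma_2+\Lambda/\gamma_1$ then gives admissibility.

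The maximum defining $\Lambda$ is finite, and it is attained because the ratio is scale-invariant on a finite-dimensional space. The only degenerate case is a nonzero piecewise constant, for which the numerator also vanishes, so the ratio can be treated as zero there. Finiteness follows from inverse inequalities, which also show that $\Lambda$ depends only on $k$ and on neighbouring mesh-size ratios. I would just remark on this rather than prove it.
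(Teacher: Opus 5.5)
Your argument is correct, and it is the standard proof behind the paper's bare citation of \cite[Lemma~1]{Stynes_DDG} (the paper itself gives no argument): insert \eqref{flux1}, write each nodal contribution as $\eta_0[v^{m}]_i^2/h_i+2[v^{m}]_iA_i$, and absorb the cross term by Young's inequality with weight $\gamma_1/\Lambda$ --- only your second Young inequality does this, since the first one (weights $\gamma_1$ and $1/\gamma_1$, mislabelled $\epsilon=1$) leaves a term $(\gamma_1-\Lambda/\gamma_1)|v^{m}|_1^2$ of unknown sign and should be deleted. Your boundary caveat is genuinely necessary, not cosmetic: with \eqref{flux1} read literally at $i=0$, the function equal to $(x_1-x)/h_1$ on $\mathcal{K}_{x_1}$ and zero elsewhere makes the left-hand side of Definition~\ref{def:admissible} equal to $(\gamma_1-2)/h_1<0$ for every $\eta_0$, so the statement as printed fails and holds only with the boundary penalty you add (together with a definition of $h_0$ and one boundary convention for $[(v^{m})'']$ used consistently in both the flux and the hypothesis, as you do).
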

\begin{proof}
	For the complete proof, see \cite[Lemma~1]{Stynes_DDG}.
\end{proof}
\begin{lemma}[{\cite[Lemma 2.2]{liu2010direct}}]
	Let $k\in\mathbb{N}$, and consider a uniform mesh on the interval $[0,1]$. The numerical flux \eqref{flux1} and \eqref{flux2} with $\eta_1=0$ is admissible provided that
	\begin{equation*}
		\eta_0 \geq \gamma_2+\frac{1}{4\gamma_1}
		\lambda_{\max}\!\left(H^{-1/2}OH^{-1/2}\right),
		\label{eq:beta0_condition}
	\end{equation*}
	where $H$ is the $k\times k$ Hilbert matrix with entries
	\[
	H_{i,j}=\frac{1}{i+j-1}, \qquad i,j=1,\ldots,k,
	\]
	$O$ is the $k\times k$ matrix whose entries are all equal to $1$, and
	$\lambda_{\max}(\cdot)$ represents the largest eigenvalue of
	$H^{-1/2}OH^{-1/2}$.
\end{lemma}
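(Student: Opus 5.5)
The plan is to reduce the claim to Lemma~\ref{lemma2} with $\eta_1=0$: it suffices to show
\[
\max_{v\in \mathcal{W}_{\mathcal{N}}^{k}(I_x)} \frac{\sum_{i=0}^{\mathcal{N}} h_i\{v'\}_i^2}{\sum_{i=1}^{\mathcal{N}}\int_{\mathcal{K}_{x_i}}(v')^2\,dx}\le \frac14\lambda_{\max}\!\left(H^{-1/2}OH^{-1/2}\right).
\]
On a uniform mesh all $h_i=h$, so the left-hand side is a ratio of quadratic forms in $p:=v'$. Since $v'$ ranges over all piecewise polynomials of degree $k-1$, the maximization is over $p$. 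I will bound the numerator element by element and the denominator exactly.

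For an interior node, $(\{p\}_i)^2=\tfrac14(p^-+p^+)^2\le \tfrac12\big((p^-_i)^2+(p^+_i)^2\big)$. Each element $\mathcal{K}_{x_i}$ touches two nodes, so collecting terms bounds the numerator by $\tfrac12 h\sum_{i}\big((p(x_{i-1}^+))^2+(p(x_i^-))^2\big)$. The boundary nodes carry the full one-sided value, or vanish if fluxes there are treated separately. I will need to check the boundary convention: at the boundary the flux is $\partial_x v$ with no averaging, and if this spoils the constant I will handle those nodes by the same one-sided estimate, possibly at the price of a factor absorbed into $\gamma_1$.

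The key step is then the local inverse inequality
\[
h\,(p(x_i^-))^2\le \lambda\int_{\mathcal{K}_{x_i}}p^2\,dx,\qquad p\in\mathbb{P}^{k-1},
\]
with the sharp constant. Map to the reference interval $[0,1]$ with the endpoint at $s=1$ and write $p(s)=\sum_{m=1}^{k}c_m s^{m-1}$. Then $\int_0^1p^2=c^THc$ with $H$ the Hilbert matrix, and $p(1)^2=(\sum c_m)^2=c^TOc$. So the sharp constant is the generalized Rayleigh quotient, $\lambda_{\max}(H^{-1/2}OH^{-1/2})$. The other endpoint gives the same constant by the reflection $s\mapsto 1-s$.

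Combining the pieces, each element contributes at most $\tfrac12\cdot 2\cdot\tfrac12\lambda=\tfrac12\lambda$ times $\int(v')^2$. Whether the constant comes out as $\tfrac14\lambda$ or $\tfrac12\lambda$ depends on how the two endpoint contributions are coupled, and I expect this constant bookkeeping to be the main obstacle. To try to recover the factor $\tfrac14$, I will bound the two endpoints jointly: $p(0)^2+p(1)^2\le \lambda'\int p^2$. By parity decomposition, $p(0)^2+p(1)^2$ is at most the maximum single-endpoint value in the sharp sense, which should yield the stated $\tfrac14$.

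Finally, Lemma~\ref{lemma2} gives admissibility in $x$. The $y$-direction is identical.
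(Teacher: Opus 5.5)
The paper offers only the citation to Liu--Yan here, so your argument has to stand on its own, and it does not: the inequality you reduce to is false. Since $O=\mathbf{1}\mathbf{1}^{T}$, the matrix $H^{-1/2}OH^{-1/2}$ has rank one, and $\lambda_{\max}=\mathbf{1}^{T}H^{-1}\mathbf{1}=\max_{p\in\mathbb{P}^{k-1}}p(1)^2/\int_0^1p^2\,ds=k^2$. This is consistent with $\eta_0=[k^2/2]+2$ in Table~\ref{table_1}. For $k=1$ you would therefore need $\sum_i h\{v'\}_i^2\le\frac14\int_0^1(v')^2\,dx$. But $v(x)=x$ has $\{v'\}_i=1$ at every node, so the ratio is at least $(\mathcal{N}-1)/\mathcal{N}$, whether or not the boundary nodes are counted. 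Done correctly, your two steps give $\frac12\sum_i h\bigl(p(x_{i-1}^+)^2+p(x_i^-)^2\bigr)\le\frac12\cdot 2\lambda_{\max}\int_0^1(v')^2\,dx$, i.e.\ the constant $\lambda_{\max}$. The third factor $\frac12$ in your count has no source. The parity repair also goes the wrong way: trivially $\max_p\bigl(p(0)^2+p(1)^2\bigr)/\int_0^1p^2\ge\lambda_{\max}$, and for $k=1$ the choice $p\equiv1$ gives $2>1$.

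This is not a bookkeeping issue; the statement itself fails under the paper's Definition~\ref{def:admissible}. That definition contains the interface term $[v]\{v'\}$, so with $\eta_1=0$ the cross term is $2\{v'\}[v]$. Take a periodic uniform mesh with $k=1$ and let $v=x-x_{i-1}$ on each $\mathcal{K}_{x_i}$. Then $v'\equiv1$, and at all $\mathcal{N}$ nodes $[v]_i=-h$ and $\{v'\}_i=1$, so the admissibility inequality reduces to $\gamma_1+\eta_0-2\ge\gamma_2$. With $\gamma_1=\gamma_2=\frac12$ the hypothesis permits $\eta_0=1$, yet the left side is then $\gamma_1-1<0$ for every $\gamma_1\in(0,1)$. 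The paper's own choice $\gamma_1=\frac12$, $\gamma_2=1$, $\eta_0=2$ fails as well.

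The factor $\frac{1}{4\gamma_1}$ is what your ingredients produce when the cross term is $\{v'\}[v]$ with coefficient one. That is the flux-only condition $\gamma_1\sum_i\int_{\mathcal{K}_{x_i}}(v')^2\,dx+\sum_i\bigl(\widehat{v}\,[v]\bigr)_i\ge\gamma_2\sum_i[v]_i^2/h$, without the correction term. There, Young's inequality costs $\frac{1}{4(\eta_0-\gamma_2)}\sum_i h\{v'\}_i^2$. Combining $\{p\}^2\le\frac12\bigl((p^-)^2+(p^+)^2\bigr)$ with your correct sharp one-endpoint estimate $h\,p(x_i^{\mp})^2\le\lambda_{\max}\int_{\mathcal{K}}p^2$ bounds this by $\frac{\lambda_{\max}}{4(\eta_0-\gamma_2)}\int(v')^2$, which is exactly the stated condition. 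Under Definition~\ref{def:admissible}, the same ingredients together with Lemma~\ref{lemma2} give only $\eta_0\ge\gamma_2+\lambda_{\max}/\gamma_1$ on a periodic mesh.

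Finally, the boundary nodes cannot be absorbed into $\gamma_1$. The flux \eqref{flux1} has no penalty at $x_0$ and $x_{\mathcal{N}}$, and $v(x)=x$ makes the left side $\gamma_1-2<0$ while the right side is $\gamma_2/h$, for every $\eta_0$. Any correct version must treat all nodes as interior (periodic) or penalize the boundary flux.
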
\label{lemma3}
As one possible choice, set $\gamma_2=1,\quad \gamma_1=\frac12,$ and
\begin{equation}
	\eta_0=\bigg[
	\frac12\lambda_{\max}\!\left(H^{-1/2}OH^{-1/2}\right)
	\bigg]+2,
	\label{eq:beta0_choice}
\end{equation}
where $[.]$ denotes the integer part. Numerical calculations show
that in \eqref{eq:beta0_choice}, $\eta_0=\bigg[\frac{k^2}{2}\bigg]+2,
\quad k=1,2,\ldots,9,$
 as reported in Table~\ref{table_1}.
\begin{table}[htbp]
	\centering
	\caption{\it{Analytical estimation on $\eta_0(k)$.}}\label{table_1}
	
	\begin{tabular}{@{}lllllllllll@{}}
		\hline
		&	$k$  &   1 &  2 &  3 & 4 & 5 &6 &  7& 8 & 9\\
		\hline
		&	$\eta_0$  & 2 & 4 & 6 & 10 & 14 & 20 & 26 & 34 &  42  \\
		\hline
	\end{tabular}
\end{table}
\subsection{Coercivity and stability analysis of the DDG method}\label{sec(2.3.3)} 
In this subsection, we establish the coercivity, and stability of the proposed numerical schemes. To facilitate the analysis, we introduce the energy norm and the DG norm. The energy norm is induced by the bilinear form of the underlying differential operator, whereas the DG norm incorporates both the solution and the jumps across element interfaces. Since the DG norm is slightly stronger than the energy norm, it is employed throughout this work for the error and convergence analysis.
The energy norm and the DG norm in the $x$-direction are defined as
\begin{equation}\label{DG_norm}
	\left\{
	\begin{array}{ll}
		\|v^{m}\|_{E}^{2}
		&=
		|v^{m}|_{1}^{2}
		+
		\displaystyle\sum_{i=0}^{\mathcal{N}}
		\frac{\eta_{0}}{h_i}[v^{m}]_{i}^{2},
		\qquad
		\|v^{m}\|_{\mathrm{DG}}^{2}
		=
		\mu \|v^{m}\|_{E}^{2}
		+
		\|v^{m}\|_{0}^{2}.
		\\[2mm]
		
	\end{array}
	\right.
\end{equation}
where
\begin{align*}
	\|v^{m}\|_{0}^{2}
	&=
	\sum_{i=1}^{\mathcal{N}}\int_{\mathcal{K}_{x_i}}(v^{m})^{2}\,dx,
	&
	|v^{m}|_{1}^{2}
	&=
	\sum_{i=1}^{\mathcal{N}}\int_{\mathcal{K}_{x_i}}(v_{x}^{m})^{2}\,dx.
\end{align*}
The DG norm $\|\cdot\|_{\mathrm{DG}}$ in the $y$-direction is defined analogously.
\begin{lemma}[Coercivity]\label{Coercivity}
	Let the numerical fluxes \eqref{flux1} and \eqref{flux2} satisfy the admissibility conditions. Then there exists a constant $\lambda>0$, independent of the perturbation parameter and the mesh size, such that
	\begin{align}
		B_x(\phi_h^{m},\phi_h^{m}) &\ge \lambda \|\phi_h^{m}\|_{DG}^{2},
		&&\forall\,\phi_h^{m}\in \mathcal{W}_{\mathcal N}^{k}(I_x),\nonumber\\
		B_y(\psi_h^{m},\psi_h^{m}) &\ge \lambda \|\psi_h^{m}\|_{DG}^{2},
		&&\forall\,\psi_h^{m}\in \mathcal{W}_{\mathcal N}^{k}(I_y).\nonumber
	\end{align}
\end{lemma}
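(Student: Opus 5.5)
We prove the $x$-direction estimate by taking $v_h^m=\phi_h:=\phi_h^{m}$ in $B_x$ and splitting $B_x(\phi_h,\phi_h)$ into a diffusion part and a convection--reaction part. Each part is bounded below separately. The $y$-direction estimate then follows verbatim, with $b_2,a_2,\theta$ and the $y$-mesh in place of $b_1,a_1$ and the $x$-mesh.

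\emph{Diffusion part.} This part is
\[
\mu\Big(|\phi_h|_1^2+\sum_{i=0}^{\mathcal N}\big(\widehat{\phi_h}[\phi_h]+[\phi_h]\{\phi_h'\}\big)_i\Big).
\]
By Lemma~\ref{lemma2}, with the choice \eqref{eq:beta0_choice}, the flux is admissible. We split $|\phi_h|_1^2=(1-\gamma_1)|\phi_h|_1^2+\gamma_1|\phi_h|_1^2$ and apply Definition~\ref{def:admissible} to the second piece together with the interface sum. This gives the lower bound
\[
\mu\Big((1-\gamma_1)|\phi_h|_1^2+\gamma_2\sum_i \frac{[\phi_h]_i^2}{h_i}\Big)\ \ge\ \mu\,\min\{1-\gamma_1,\gamma_2/\eta_0\}\,\|\phi_h\|_E^2 .
\]
With $\gamma_1=\tfrac12$, $\gamma_2=1$, and $\eta_0$ fixed by $k$ only (Table~\ref{table_1}), this constant is independent of $\mu$ and of the mesh.

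\emph{Convection--reaction part.} This part is
\[
-\sum_i\int_{\mathcal K_{x_i}} b_1\phi_h\phi_h'\,dx+\sum_i\int_{\mathcal K_{x_i}}(a_1-b_{1,x})\phi_h^2\,dx-\sum_{i}\big(b_1\widetilde{\phi_h}[\phi_h]\big)_i .
\]
On each element we write $b_1\phi_h\phi_h'=\tfrac12(b_1\phi_h^2)'-\tfrac12 b_{1,x}\phi_h^2$. The volume terms then combine into $\int(a_1-\tfrac12 b_{1,x})\phi_h^2\ge\gamma\|\phi_h\|_0^2$, using the assumption $a_1-\tfrac12\partial_x b_1\ge\gamma$.

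The element-boundary contributions $-\tfrac12\sum_i[b_1\phi_h^2]$ must then be combined with the flux term. At an interior node, using $\phi^2(x^+)-\phi^2(x^-)=2\{\phi\}[\phi]$ and $\widetilde{\phi}=\{\phi\}-(\theta-\tfrac12)[\phi]$, the two contributions sum to $b_1(\theta-\tfrac12)[\phi_h]_i^2\ge0$, since $\theta\ge\tfrac12$ and $b_1>0$. At the boundary nodes we use the conventions $[\phi(x_0)]=\phi(x_0)$ and $[\phi(x_{\mathcal N})]=-\phi(x_{\mathcal N})$, together with $\widetilde\phi|_0=0$ and $\widetilde\phi|_{\mathcal N}=\phi_{\mathcal N}$. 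A direct sign check should give nonnegative terms of the form $\tfrac12 b_1\phi_h^2$ at $x_0$ and $x_{\mathcal N}$, using inflow at $x=0$ and outflow at $x=1$.

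\emph{Conclusion and main obstacle.} Adding the two parts gives
\[
B_x(\phi_h,\phi_h)\ \ge\ \min\{1-\gamma_1,\gamma_2/\eta_0,\gamma\}\,\|\phi_h\|_{DG}^2 ,
\]
so we may take $\lambda$ to be this minimum. The main obstacle is the bookkeeping of the convection boundary terms, in particular the exact sign conventions at $x_0$ and $x_{\mathcal N}$. If one endpoint sign comes out unfavorably, we would absorb it using $b_1\ge\beta_x$ and the definitions of jump and average at the boundary. It also remains to verify that the reaction coefficient $a_1-\partial_x b_1$ appearing in $B_x$ produces exactly the $a_1-\tfrac12 b_{1,x}$ combination after the integration by parts. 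That combination depends on the form integrated by parts, and if it does not match we would redo this step in conservative form.
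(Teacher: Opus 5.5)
Your argument is correct and is the standard energy argument that the paper does not reproduce but delegates to \cite[Lemma~3]{Stynes_DDG}. The steps you left hedged all close:
\begin{itemize}
\item the volume terms give exactly $\int(a_1-\tfrac12\partial_x b_1)\phi_h^2\ge\gamma\|\phi_h\|_0^2$;
\item each interior node gives $b_1(\theta-\tfrac12)[\phi_h]_i^2\ge0$;
\item the endpoints give $\tfrac12 b_1(x_0)\phi_h(x_0^+)^2$ (the flux vanishes there) and $-\tfrac12 b_1\phi_{\mathcal N}^2+b_1\phi_{\mathcal N}^2=\tfrac12 b_1\phi_{\mathcal N}^2$.
\end{itemize}
Hence $\lambda=\min\{1-\gamma_1,\gamma_2/\eta_0,\gamma\}$ works, and no fallback is needed.

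The only thing to drop is the appeal to Lemma~\ref{lemma2} and Table~\ref{table_1}. Admissibility is already the hypothesis of the statement. Moreover, the tabulated $\eta_0$ values come from the uniform-mesh, $\eta_1=0$ result, so they would not by themselves establish admissibility on the Shishkin mesh.
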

\begin{proof}
	The details of the proof can be found in \cite[Lemma $3$]{Stynes_DDG}.
	\end{proof}
\begin{lemma}[Stability]\label{Stability}
	Let $v^{m}_{h}$ and $w^{m}_{h}$ represent the solution of Equations \eqref{E10} and \eqref{E12}, respectively. Then at the time level $t=t_{m}$, for $1\leq m\leq M$. The solution at each time step satisfies the following stability bound:
	\begin{eqnarray}
		&&\sum_{j=0}^{\mathcal{N}}\big\|w^{m}_{h}(x_{j},y)\big\|_{L^{2}(I_y)}^{2}+	\Delta t\sum_{i=0}^{m}\sum_{j=0}^{\mathcal{N}}\bigg\{\big\|v^{i}_{h}(x,y_{j})\big\|_{DG}^{2}+\big\|w^{i}_{h}(x_{j},y)\big\|_{DG}^{2}\bigg\}\nonumber\\
		&&\leq\bigg(\sum_{j=0}^{\mathcal{N}}\big\|v^{0}_{h}(x,y_{j})\big\|_{L^{2}(I_x)}^{2}+\Delta t \sum_{i=0}^{m}\sum_{j=0}^{\mathcal{N}} \bigg\{\big\|r^{i}_{1}(x,y_{j})\big\|_{L^{2}(I_x)}^{2}+\big\|r^{i}_{2}(x_{j},y)\big\|_{L^{2}(I_y)}^{2}\bigg\} \bigg).\nonumber
	\end{eqnarray}
\end{lemma}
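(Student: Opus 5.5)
The plan is a standard discrete energy argument, applied first to each one-dimensional step and then chained through the splitting. In Step~1 \eqref{E10} we fix a mesh line $y=y_j$ and take $\phi_h=v_h^m(\cdot,y_j)$. The identity
\[
(v_h^m-v_h^{m-1},v_h^m)=\tfrac12\big(\|v_h^m\|_0^2-\|v_h^{m-1}\|_0^2+\|v_h^m-v_h^{m-1}\|_0^2\big),
\]
together with the Coercivity Lemma~\ref{Coercivity}, $B_x(v_h^m,v_h^m)\ge\lambda\|v_h^m\|_{DG}^2$, gives
\[
\tfrac{1}{2\Delta t}\big(\|v_h^m\|_0^2-\|v_h^{m-1}\|_0^2\big)+\lambda\|v_h^m\|_{DG}^2\le (r_1^m,v_h^m).
\]
For the right-hand side we use Cauchy--Schwarz and Young,
\[
(r_1^m,v_h^m)\le \tfrac{1}{2\lambda}\|r_1^m\|_0^2+\tfrac{\lambda}{2}\|v_h^m\|_0^2,
\]
and absorb the last term using $\|v_h^m\|_0^2\le\|v_h^m\|_{DG}^2$. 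This leaves $\|v_h^m\|_0^2-\|v_h^{m-1}\|_0^2+\lambda\Delta t\|v_h^m\|_{DG}^2\le C\Delta t\|r_1^m\|_0^2$ on each line $y_j$.

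Step~2 \eqref{E12} is treated the same way on each line $x=x_i$ with $\psi_h=w_h^m(x_i,\cdot)$ and $B_y$. This gives $\|w_h^m\|_0^2-\|w_h^{m-1}\|_0^2+\lambda\Delta t\|w_h^m\|_{DG}^2\le C\Delta t\|r_2^m\|_0^2$. The two steps are coupled through $w_h^{m-1}=v_h^m$ and $v_h^{m}$'s initial datum $v_h^{m-1}=w_h^{m-1}$ from the previous level. Summing over the mesh lines therefore makes the discrete $L^2$ norms of $v_h^m$ over the grid (the $x$-lines indexed by $y_j$) and of $w_h^{m-1}$ (the $y$-lines indexed by $x_i$) the same grid quantity. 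We read the paper's $\sum_j\|\cdot\|_{L^2}^2$ as a discrete 2D $L^2$ norm, using the tensor-product structure and the nodal/grid identification of the transferred data, so that the two sums are comparable. Adding the two inequalities then makes the intermediate term $\sum_j\|v_h^m\|^2$ cancel, and we obtain
\[
\textstyle\sum\|w_h^m\|^2-\sum\|w_h^{m-1}\|^2+\lambda\Delta t\sum\big(\|v_h^m\|_{DG}^2+\|w_h^m\|_{DG}^2\big)\le C\Delta t\sum\big(\|r_1^m\|^2+\|r_2^m\|^2\big).
\]

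Summing this telescoping inequality over the time levels $1,\dots,m$ and using $w_h^0=v_h^0$ gives the claimed bound, with $\lambda$ and the Young constant absorbed into the generic constant. The printed statement has no constant, which would amount to normalising $\lambda$, for instance $\lambda\ge 1$ after rescaling. Otherwise a factor $C=\max(1,1/\lambda)/\min(1,\lambda)$ appears, and this does not depend on $\mu$, $\mathcal N$ or $M$.

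The main obstacle is the transfer between the two families of lines. $v_h^m$ is a piecewise polynomial in $x$ at fixed $y_j$, whereas $w_h^{m-1}$ must be a piecewise polynomial in $y$ at fixed $x_i$. The cancellation therefore needs the sum over $j$ of $x$-line norms of $v_h^m$ to be bounded by the sum over $i$ of $y$-line norms of the same grid data. I would first try to handle this by working with the tensor-product representation, or equivalently with nodal values together with a quadrature-equivalent norm, so that both sums equal one discrete 2D norm up to mesh-independent constants. Failing that, I would include a stable interpolation operator in the transfer, uniformly bounded in $L^2$ on the Shishkin mesh, and carry its constant through the estimate.
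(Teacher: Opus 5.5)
\textbf{Gap: the cancellation between $x$-lines and $y$-lines.} Your line-by-line energy estimate is the paper's argument. The paper tests \eqref{E10} with $v_h^m(\cdot,y_j)$, uses Lemma~\ref{Coercivity} and Cauchy--Schwarz, and absorbs $\|v_h^m\|_0^2\le\|v_h^m\|_{DG}^2$ to reach \eqref{E19} and its $y$-analogue \eqref{E20}. It then only says ``adding \eqref{E19} and \eqref{E20} and summing over $i$ and $j$''. Your remark about $\lambda$ is correct: the paper gets the constant-free bound only by tacitly setting $\lambda=1$. The transfer between $x$-lines and $y$-lines, which you call the main obstacle, is not addressed in the paper at all. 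So the cancellation step is missing in both arguments, and your proposed repairs do not close it.

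\textbf{Why your repairs fail.} First, telescoping over $m$ levels needs $\sum_i\|w_h^{m-1}(x_i,\cdot)\|_{L^2(I_y)}^2\le\sum_j\|v_h^m(\cdot,y_j)\|_{L^2(I_x)}^2$ with constant exactly $1$, or at worst $1+C\Delta t$. The same holds for the $y\to x$ hand-over of $u_h^{m-1}$ at the start of each step. An equivalence constant $c>1$, or a transfer operator with $L^2$ bound $C_\Pi>1$, produces a factor $c^{m}$, which is not bounded uniformly in $M$.

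Second, for the unweighted sums over Shishkin mesh lines used in the statement, the two quantities are not even uniformly equivalent. Take data $g=\chi_{[1-\sigma_x,1]}(x)$, which lies in $\mathcal{W}_{\mathcal N}^k(I_x)$ on every $x$-line. The $x$-line sum equals $(\mathcal N+1)\sigma_x$. Every $y$-line $x=x_i$ with $i>\mathcal N/2$ carries the constant $1$ (for any transfer that reproduces constants), so the $y$-line sum is at least $\mathcal N/2$. The ratio is of order $(\mu\ln\mathcal N)^{-1}$. Now take $r\equiv0$, $m=1$ and let $\Delta t\to0$ on a fixed mesh, so that $v_h^1\to v_h^0$ and $w_h^1\to w_h^0$. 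This shows the printed inequality itself fails for small $\mu$.

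\textbf{What works.} Change the norm so that the two sums coincide exactly. Solve the line problems on the Gauss lines $y=y_{j,q}$ (resp.\ $x=x_{i,p}$), $q,p=1,\dots,k+1$, of each element, and sum with the quadrature weights. Mesh-node lines are ill-suited in any case, since DG functions are double-valued there. Since $(k+1)$-point Gauss quadrature is exact for degree $2k$, every $g\in\mathcal{W}_{\mathcal N}^k(I_x)\otimes\mathcal{W}_{\mathcal N}^k(I_y)$ satisfies
\[
\sum_{j,q}\omega_{j,q}\|g(\cdot,y_{j,q})\|_{L^2(I_x)}^2=\|g\|_{L^2(I)}^2=\sum_{i,p}\omega_{i,p}\|g(x_{i,p},\cdot)\|_{L^2(I_y)}^2 .
\]
Equivalently, pose both half-steps in the tensor-product space with the $L^2(I)$ inner product. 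With this, your argument goes through verbatim. It gives the bound in the $L^2(I)$ norm (with correspondingly weighted DG and source sums) and constant $\max(1,\lambda^{-1})/\min(1,\lambda)$. That is a corrected form of the lemma, not the printed one.
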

\begin{proof}
	Taking  $\phi_h=v^{m}_{h}(x,y_{j})$,  $\forall y_{j}\in \overline{I}_{y}$ in Equation \eqref{E10}, we obtain
	\begin{eqnarray}
		\bigg(\frac{v_{h}^{m}-v_{h}^{m-1}}{\Delta t},v^{m}_{h}(x,y_{j})\bigg)_{L^{2}(I_x)}+B_{x}\big(v^{m}_{h},v^{m}_{h}(x,y_{j})\big)=\big(r^{m}_{1}(x,y_{j}),v^{m}_{h}(x,y_{j})\big)_{L^{2}(I_x)}.\nonumber
	\end{eqnarray}
	From Lemma \ref{Coercivity} and the  Cauchy-Schwarz inequality, we have
	\begin{eqnarray}
		&&\frac{1}{2\Delta t}\bigg(\|v_{h}^{m}\|_{L^{2}(I_x)}^{2}-\|v_{h}^{m-1}\|^{2}_{L^{2}(I_x)}+\|v_{h}^{m}-v_{h}^{m-1}\|_{L^{2}(I_x)}^{2}\bigg)+\|v_{h}^{m}\|_{DG}^{2}\nonumber\\
		&&\leq\frac{1}{2}\bigg(\|r^{m}_{1}(x,y_{j})\|_{L^{2}(I_x)}^{2}+ \|v^{m}_{h}(x,y_{j})\|_{L^{2}(I_x)}^{2}\bigg).\nonumber
	\end{eqnarray}
	Using the fact that $\|v_{h}^{m}-v_{h}^{m-1}\|_{L^{2}(I_x)}^{2}\geq 0$ and $\|v^{m}_{h}(x,y_{j})\|_{L^{2}(I_x)}^{2} \leq \|v^{m}_{h}(x,y_{j})\|_{DG}^{2}$,  we have
	\begin{eqnarray}
		&&\bigg(\|v_{h}^{m}(x,y_{j})\|_{L^{2}(I_x)}^{2}-\|v_{h}^{m-1}(x,y_{j})\|_{L^{2}(I_x)}^{2}\bigg)+2\Delta t\|v_{h}^{m}(x,y_{j})\|_{DG}^{2}\nonumber\\
		&&\leq\Delta t\bigg(\|r^{m}_{1}(x,y_{j})\|_{L^{2}(I_x)}^{2}+ \|v^{m}_{h}(x,y_{j})\|_{DG}^{2}\bigg).\nonumber
	\end{eqnarray}
	In the $x$-direction, the above inequality can be expressed as:
	\begin{eqnarray}\label{E19}
		\bigg(\|v_{h}^{m}\|_{L^{2}(I_x)}^{2}\bigg)+\Delta t\|v_{h}^{m}\|_{DG}^{2}\leq\bigg(\|v_{h}^{m-1}\|_{L^{2}(I_x)}^{2}+\Delta t\|r^{m}_{1}(x,y_{j})\|_{L^{2}(I_x)}^{2}\bigg).
	\end{eqnarray}
	Similarly, the following estimate holds in the $y$- direction:
	\begin{eqnarray}\label{E20}
		\bigg(\|w^{m}_h\|_{L^{2}(I_y)}^{2}\bigg)+\Delta t\|w^{m}_h\|_{DG}^{2}\leq\bigg(\|w^{m-1}_h\|_{L^{2}(I_y)}^{2}+\Delta t\|r^{m}_{2}(x_{j},y)\|_{L^{2}(I_y)}^{2}\bigg).
	\end{eqnarray}
	Adding Equations \eqref{E19} and \eqref{E20} and summing over $i$ and $j$, we obtain the desired stability result.
\end{proof}
\section{Error analysis}\label{sec(2.4)}
In the FEM, the total error is commonly decomposed into two components, namely the interpolation error $(u^{m}- Pu^{m})$ and the discretization error $(Pu^{m}-u^{m}_{h})$. In this section, we first derive an estimate for the interpolation error in the DG norm. Subsequently, this result is utilized to establish a bound for the discretization error corresponding to the proposed finite element formulation. Finally, by combining these estimates, we obtain the overall error bound for the numerical scheme.
\[
\|u^{m}-u^{m}_{h}\|_{DG}\leq \|u^{m}-Pu^{m}\|_{DG}+\|Pu^{m}-u^{m}_{h}\|_{DG}.
\]
Next, we focus on deriving error estimates for Equation \eqref{E10} and \eqref{E12}. The total error is further decomposed into contributions arising from the spatial directions as follows:
\begin{eqnarray}
	&&\|u^{m}-u^{m}_{h}\|_{DG} \leq \|v^{m}(x,y_{j})-v^{m}_{h}(x,y_{j})\|_{DG} + \|w^{m}(x_{i},y)-w^{m}_{h}(x_{i},y)\|_{DG},\nonumber\\
	&&\|\mathcal{E}^{m}_{h,i,j}\|_{DG}\leq \|E^{* m}_{h,j}\|_{DG}+\|E^{\dagger m}_{h,i}\|_{DG}, \quad i,j=0,1,\ldots,\mathcal{N},\nonumber
\end{eqnarray}
where $E^{*m}_{h,j}$ and $E^{\dagger m}_{h,i}$  denote the errors of the $x$- and $y$-directions, respectively, at time level $t=t_{m}$.

Each directional error is further decomposed into interpolation and discretization parts. For the $x$-direction, we have
\begin{eqnarray}
	\|E^{* m}_{h,j}\|_{DG}&& \leq \|v^{m}(x,y_{j})-Pv^{m}(x,y_{j})\|_{DG} + \|Pv^{m}(x,y_{j})-v^{m}_{h}(x,y_{j})\|_{DG}\nonumber\\
	&& \leq \|\rho^{* m}\|_{DG} + \|\xi^{* m}\|_{DG},\quad j=0,1,\ldots,\mathcal{N},\nonumber
\end{eqnarray}
where $\rho^{*m}$ and $\xi^{*m}$ represent the interpolation and discretization errors, respectively.

Similarly, for the $y$-direction, we obtain
\begin{eqnarray}
	\|E^{\dagger m}_{h,j}\|_{DG} &&\leq \|w^{m}(x_{i},y)-Pw^{m}(x_{i},y)\|_{DG} + \|Pw^{m}(x_{i},y)-w^{m}_{h}(x_{i},y)\|_{DG}\nonumber\\
	&&\leq \|\rho^{\dagger m}\|_{DG} + \|\xi^{\dagger m}\|_{DG},\quad i=0,1,\ldots,\mathcal{N},\nonumber
\end{eqnarray}
where $\rho^{\dagger m}$ and $\xi^{\dagger m}$ denote the corresponding interpolation and discretization errors, respectively.
\subsection{Interpolation error}\label{Interpolation}
Let $P$ denote the $L^{2}$-projection from $L^{2}(I_{x})$ and $L^
{2}(I_{y})$ onto the finite element spaces $L^{2}(I_{x})$ and $L^{2}(I_{y})$, respectively. That is, for each $v^{m} \in L^{2}(I_{x})$ and $w^{m} \in L^{2}(I_{y})$, let $Pv^{m} \in \mathcal{W}_{\mathcal{N}}^k(I_x)$ and $Pw^{m} \in \mathcal{W}_{\mathcal{N}}^k(I_y)$ be the unique elements such that
\[
\begin{aligned}
	\int_{\mathcal{K}_{x_i}} \bigl(P v^{m}(x)-v^{m}(x)\bigr)v_h(x)\,dx &= 0,
	&& \forall\, v_h \in \mathcal{W}_{\mathcal{N}}^k(I_x),
	\quad i=1,\ldots,\mathcal{N},\\
	\int_{\mathcal{K}_{y_j}} \bigl(P w^{m}(y)-w^{m}(y)\bigr)w_h(y)\,dy &= 0,
	&& \forall\, w_h \in \mathcal{W}_{\mathcal{N}}^k(I_y),
	\quad j=1,\ldots,\mathcal{N}.
\end{aligned}
\]
\begin{lemma}\label{lemma8}
	Suppose that \(v^{m} \in H_0^1(I_x) \cap H^{k+1}(I_x)\). Then the following holds for all \(l \in \{0,1\}\):
	\begin{eqnarray*}
		&&	\sum_{i=1}^{\frac{\mathcal{N}}{2}}\|Pv^{m}-v^{m}\|^{2}_{l,\mathcal{K}_{x_i}}\leq  C \left[\mu^{1-2l} \mathcal N^{-2(k+1)}+\mathcal N^{-2k-2+2l}\right],\\[1.5ex]
		&&	\sum_{i=\frac{\mathcal{N}}{2}+1}^{\mathcal N}\|Pv^{m}-v^{m}\|^{2}_{l,\mathcal{K}_{x_i}}\leq C \left[\mu^{1-2l} (\mathcal N^{-1}\ln \mathcal{N})^{2k+2-2l}\right].
	\end{eqnarray*}
\end{lemma}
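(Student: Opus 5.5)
We use the decomposition $v^{m}=S^{*}+E^{*}$ of Lemma~\ref{lemma1} (at $t=t_m$, for each fixed $y_j$). Since $P$ is linear, $Pv^{m}-v^{m}=(PS^{*}-S^{*})+(PE^{*}-E^{*})$, and we bound the two pieces separately on the coarse region $I_{x1}$ (elements $i\le \mathcal N/2$) and on the fine region $I_{x2}$ (elements $i>\mathcal N/2$). On each element we use the standard local approximation properties of the $L^2$-projection onto $\mathbb P^k$:
\[
\|Pz-z\|_{l,\mathcal K_{x_i}}\le C h_i^{\,s-l}|z|_{s,\mathcal K_{x_i}},\quad 0\le l\le s\le k+1,
\]
and the $L^2$ and $H^1$ stability of $P$ on a single element:
\[
\|Pz\|_{0,\mathcal K}\le \|z\|_{0,\mathcal K},\qquad |Pz|_{1,\mathcal K}\le C|z|_{1,\mathcal K}.
\]
The $H^1$ bound follows by comparing $P$ with a local interpolant and using an inverse estimate. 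Since the inverse estimate enters only elementwise against the same element size, the constant does not depend on the mesh anisotropy.

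\emph{Smooth part.} By Lemma~\ref{lemma1}, $|\partial_x^{k+1}S^{*}|\le C$. Hence on every element
\[
\|PS^{*}-S^{*}\|_{l,\mathcal K_{x_i}}^2\le C h_i^{2(k+1-l)}h_i .
\]
Summing over the coarse elements, where $h_i\le C\mathcal N^{-1}$, gives $C\mathcal N^{-2k-2+2l}$. Summing over the fine elements, where $h_i\le C\mu\mathcal N^{-1}\ln\mathcal N$ and the total length is $\sigma_x\le C\mu\ln\mathcal N$, gives a bound of the required (smaller) form, absorbed into $C\mu^{1-2l}(\mathcal N^{-1}\ln\mathcal N)^{2k+2-2l}$ because $\mu\le1$.

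\emph{Layer part on the coarse mesh.} We do not use approximation here. Instead we use the stability of $P$ and the smallness of $E^{*}$:
\[
\sum_{i\le\mathcal N/2}\|PE^{*}-E^{*}\|_{l}^2\le C\|E^{*}\|_{l,I_{x1}}^2 .
\]
With $|\partial_x^{l}E^{*}|\le C\mu^{-l}e^{-\beta_x(1-x)/\mu}$ we get
\[
\|E^{*}\|_{l,I_{x1}}^2\le C\mu^{-2l}\int_0^{1-\sigma_x}e^{-2\beta_x(1-x)/\mu}\,dx\le C\mu^{1-2l}e^{-2\beta_x\sigma_x/\mu}.
\]
The choice $\sigma_x=(k+1)\mu\ln\mathcal N/\beta_x$ gives $e^{-2\beta_x\sigma_x/\mu}=\mathcal N^{-2(k+1)}$, which yields exactly the term $\mu^{1-2l}\mathcal N^{-2(k+1)}$. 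If the mesh is uniform ($\sigma_x=1/2$), then $\mu^{-1}\le C\ln\mathcal N$, and the approximation estimate on the whole interval already gives the result.

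\emph{Layer part on the fine mesh.} Here we use the approximation estimate with $s=k+1$:
\[
\sum_{i>\mathcal N/2}\|PE^{*}-E^{*}\|_{l}^2\le C\sum h_i^{2(k+1-l)}|E^{*}|_{k+1,\mathcal K_{x_i}}^2 .
\]
Using $h_i\le C\mu\mathcal N^{-1}\ln\mathcal N$ and
\[
|E^{*}|_{k+1,I_{x2}}^2\le C\mu^{-2(k+1)}\int_{1-\sigma_x}^1 e^{-2\beta_x(1-x)/\mu}\,dx\le C\mu^{-2k-1},
\]
the total is $C\mu^{2k+2-2l}(\mathcal N^{-1}\ln\mathcal N)^{2k+2-2l}\mu^{-2k-1}=C\mu^{1-2l}(\mathcal N^{-1}\ln\mathcal N)^{2k+2-2l}$. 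Combining the four contributions with the triangle inequality gives both estimates.

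The main obstacle is the coarse-region layer term for $l=1$. It needs the elementwise $H^1$ stability of $P$, not merely the $L^2$ stability, and the constant must be independent of $\mu$. I would prove this first by writing $Pz=\Pi z+P(z-\Pi z)$ with $\Pi$ a local Lagrange interpolant and applying the inverse estimate on the same element. A secondary check is that on $I_{x1}$ the relevant Sobolev norms of $S^{*}$ are bounded uniformly in $\mu$; this requires $|\partial_x^{k+1}S^{*}|\le C$, which Lemma~\ref{lemma1} provides.
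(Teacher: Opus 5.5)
Your proposal is correct and follows the standard argument the paper defers to (\cite[Lemma~6]{Stynes_DDG}): split $v^{m}=S^{*}+E^{*}$, use local approximation of the $L^2$-projection for $S^{*}$ on both regions and for $E^{*}$ on the fine region, and on the coarse region use the elementwise $L^2$/$H^1$ stability of $P$ together with $\int_0^{1-\sigma_x}e^{-2\beta_x(1-x)/\mu}\,dx\le C\mu\,\mathcal N^{-2(k+1)}$, which gives exactly the stated $\mu^{1-2l}$ factors. The only slip is in your aside on the uniform-mesh case: there you should bound $|E^{*}|_{k+1,[0,1/2]}^2\le C\mu^{-2k-1}e^{-\beta_x/\mu}\le C$ using the exponential decay, because using $\mu^{-1}\le C\ln\mathcal N$ instead leaves logarithmic factors that the coarse-region estimate does not allow.
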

\begin{proof} 
	The proof follows analogously to \cite[Lemma~6]{Stynes_DDG}.
\end{proof}
\begin{lemma}\label{lemma9}
	Suppose that \(v^{m} \in H_0^1(I_x) \cap H^{k+1}(I_x)\). Then the following holds for all $n \in \{0, 1, 2\}$:
	\begin{align*}
		\sum_{i=1}^{\frac{\mathcal{N}}{2}} (h_i)^{2n-1} \|Pv^{m} - v^{m}\|^2_{n,\infty,\mathcal{K}_{x_i}} 
		&\leq C \left[ \mathcal N^{-2k}+\mu \mathcal N^{-2k}+ \mu^{1-2n}\mathcal N^{-2k-2n}, \right] \\[1.5ex]
		\sum_{i=\frac{\mathcal{N}}{2}+1}^{\mathcal{N}} (h_i)^{2n-1} \|Pv^{m} - v^{m}\|^2_{n,\infty,\mathcal{K}_{x_i}} 
		&\leq C \left[ (\mu \mathcal{N}^{-1} \ln \mathcal{N})^{2k}+ \mu^{-1} (\mathcal{N}^{-1} \ln \mathcal{N})^{2k} \right].
	\end{align*}
\end{lemma}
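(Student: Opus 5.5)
Write $v^m=S^{*}+E^{*}$ as in Lemma~\ref{lemma1}. Since $P$ is linear, $Pv^m-v^m=(PS^{*}-S^{*})+(PE^{*}-E^{*})$, and we estimate the two parts separately on the coarse region ($i\le \mathcal N/2$) and on the fine region ($i>\mathcal N/2$). The basic tool is the local $L^\infty$ approximation property of the elementwise $L^2$ projection. For $\phi\in W^{q,\infty}(\mathcal K_{x_i})$ with $n\le q\le k+1$ it reads
\[
\|P\phi-\phi\|_{n,\infty,\mathcal K_{x_i}}\le C h_i^{\,q-n}\|\phi^{(q)}\|_{\infty,\mathcal K_{x_i}} ,
\]
which follows by scaling to the reference element and Bramble--Hilbert. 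It relies on the $L^\infty$ stability of $P$ on a single element, which holds because $P$ is local. For $n=2$ we use $q\le k+1$; when $k=1$, the second derivative of $P\phi$ vanishes and only $\phi''$ remains. The quantity to bound is then $\sum_i h_i^{2n-1}\|\cdot\|^2_{n,\infty}\le \sum_i h_i^{2n-1}h_i^{2(q-n)}\|\phi^{(q)}\|_\infty^2$.

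\emph{Smooth part.} Take $q=k+1$ everywhere. Lemma~\ref{lemma1} gives $|\partial_x^{k+1}S^{*}|\le C$. With $\mathcal N$ elements of size $h_i\le C\mathcal N^{-1}$ on the coarse part, this contributes $C\sum_{i\le\mathcal N/2}h_i^{2k+1}\le C\mathcal N^{-2k}$. On the fine part, $h_i\le C\mu\mathcal N^{-1}\ln\mathcal N$, so the contribution is at most $C\mathcal N(\mu\mathcal N^{-1}\ln\mathcal N)^{2k+1}\le C(\mu\mathcal N^{-1}\ln\mathcal N)^{2k}$. This produces the terms $\mathcal N^{-2k}$ and $(\mu\mathcal N^{-1}\ln\mathcal N)^{2k}$.

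\emph{Layer part on the coarse mesh.} Here we do not approximate. We use $\|PE^{*}-E^{*}\|_{n,\infty}\le C\|E^{*}\|_{n,\infty}$, or the $q=n$ estimate, together with Lemma~\ref{lemma1}: $|\partial_x^nE^{*}|\le C\mu^{-n}e^{-\beta_x(1-x)/\mu}\le C\mu^{-n}\mathcal N^{-(k+1)}$ for $x\le 1-\sigma_x$, by the choice of $\sigma_x$. Squaring and summing with weight $h_i^{2n-1}\approx\mathcal N^{1-2n}$ over $\mathcal N/2$ elements gives $C\mu^{-2n}\mathcal N^{2-2n-2k-2}=C\mu^{-2n}\mathcal N^{-2k-2n}$. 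This is $\mu^{-1}$ worse than the target $\mu^{1-2n}\mathcal N^{-2k-2n}$. The extra factor $\mu$ is recovered by using the integrated exponential rather than its maximum: $\sum_i h_i e^{-2\beta_x(1-x_i)/\mu}\le C\mu\,e^{-2\beta_x\sigma_x/\mu}$. Only the last few coarse elements see the layer, and the sum is bounded by an integral of width $O(\mu)$. This yields the term $\mu^{1-2n}\mathcal N^{-2k-2n}$. For $n=0$ the same computation gives $\mathcal N^{-2k}$-type terms. The $\mu\mathcal N^{-2k}$ term absorbs cross contributions, such as the fine-part smooth term when $\mu$ is not small.

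\emph{Layer part on the fine mesh.} Take $q=k+1$ again. Then $h_i^{2n-1}h_i^{2(k+1-n)}\|\partial_x^{k+1}E^{*}\|^2_\infty\le h_i^{2k+1}\mu^{-2k-2}e^{-2\beta_x(1-x_i)/\mu}$. We write $h_i^{2k+1}=h_i\,(\mu\mathcal N^{-1}\ln\mathcal N)^{2k}$ and use $\sum_i h_i e^{-2\beta_x(1-x_i)/\mu}\le C\mu$. The result is $C\mu^{-2k-2}\mu^{2k}\mu(\mathcal N^{-1}\ln\mathcal N)^{2k}=C\mu^{-1}(\mathcal N^{-1}\ln\mathcal N)^{2k}$, which matches the second term of the fine estimate. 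If $\mu$ is not small, the mesh is uniform and everything follows from the smooth-part argument.

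\emph{Main obstacle.} The main obstacle is the coarse-mesh layer estimate. One has to gain exactly the power $\mu$ from the exponential sum, and to check that the element straddling the transition point $1-\sigma_x$ is handled correctly. A supporting step is the $L^\infty$-stability of $P$ for derivatives up to order $2$ via an inverse inequality, where the factor $h_i^{-n}$ must be absorbed correctly.
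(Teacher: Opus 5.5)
Your estimates for the smooth part, for the layer part on the fine mesh, and for the coarse-mesh layer term with $n=0$ are fine. For comparison, the paper gives no argument of its own; it only points to Lemma~7 of the cited DDG paper.

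\textbf{The gap.} It is exactly the step you flag as the main obstacle: the inequality $\sum_{i\le\mathcal N/2}h_i e^{-2\beta_x(1-x_i)/\mu}\le C\mu\,e^{-2\beta_x\sigma_x/\mu}$ is false. On the coarse part every $h_i=2(1-\sigma_x)/\mathcal N\ge\mathcal N^{-1}$. You are summing right-endpoint values of an increasing function, so the sum is at least its last term, $h_{\mathcal N/2}e^{-2\beta_x\sigma_x/\mu}=h_{\mathcal N/2}\mathcal N^{-2(k+1)}$. Replacing the sum by an integral over a strip of width $O(\mu)$ is legitimate only when the step is $O(\mu)$. That is why your fine-mesh computation works and the coarse one does not. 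Summing the geometric series correctly gives $\sum_{i\le\mathcal N/2}h_ie^{-2\beta_x(1-x_i)/\mu}\le C(\mu+\mathcal N^{-1})\mathcal N^{-2(k+1)}$. So for $n\ge1$ your argument actually yields $C[\mathcal N^{-2k}+\mu^{1-2n}\mathcal N^{-2k-2n}+\mu^{-2n}\mathcal N^{-2k-2n-1}]$ on the coarse mesh. The last term is not covered by the claimed right-hand side when $\mu\mathcal N$ is small.

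\textbf{Why no sharper argument can close it.} For $n=1,2$ the coarse estimate as stated does not hold uniformly in $\mu$. Take $v=C_0+C_1x+e^{-(1-x)/\mu}$, the $x$-profile of the paper's own test solution, so $\beta_x=1$ and $\sigma_x=(k+1)\mu\ln\mathcal N$. It vanishes at $x=0,1$, has exactly the structure of Lemma~\ref{lemma1}, and $PS^{*}=S^{*}$ for $k\ge1$.

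Work on $\mathcal K_{x_{\mathcal N/2}}$, which has length $h\ge\mathcal N^{-1}$ and right endpoint $1-\sigma_x$. There $\sup|\partial_x^nE^{*}|=\mu^{-n}\mathcal N^{-(k+1)}$. On the other hand, the $L^1\to L^\infty$ bound for $P$ on $\mathbb P^k$ plus an inverse estimate give $\|(PE^{*})^{(n)}\|_{\infty}\le Ch^{-n-1}\|E^{*}\|_{L^1(\mathcal K_{x_{\mathcal N/2}})}\le C\mu h^{-n-1}\mathcal N^{-(k+1)}$.

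Hence for $\mu\le c\,\mathcal N^{-1}$ this single element already contributes at least $c\,\mu^{-2n}\mathcal N^{-2k-2n-1}$. The right-hand side is $C[\mathcal N^{-2k}+\mu\mathcal N^{-2k}+\mu^{1-2n}\mathcal N^{-2k-2n}]$. For fixed $\mathcal N$, the former grows like $\mu^{-2n}$ as $\mu\to0$, the latter only like $\mu^{1-2n}$. In the paper's regime $\mu=10^{-8}$, $\mathcal N=64$, they differ, up to constants, by a factor $(\mu\mathcal N)^{-1}\approx 1.6\cdot 10^{6}$.

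So the factor $\mu$ you are trying to recover is not there. The provable coarse bound carries the extra term $\mu^{-2n}\mathcal N^{-2k-2n-1}$. Wherever the $n=1,2$ cases are used, the estimate has to be organised differently, for instance by treating the transition node $1-\sigma_x$ separately. One such place is $\mu\sum_i h_i\{\partial_x\rho^{*m}\}_i^2$ in Theorem~\ref{theorem2}, where the extra term becomes $\mu^{-1}\mathcal N^{-2k-3}$.
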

\begin{proof}
	The proof follows using techniques similar to those presented in \cite[Lemma~$7$]{Stynes_DDG}. 
\end{proof}
\begin{theorem}\label{theorem1}
	Suppose that \(v^{m} \in H_0^1(I_x) \cap H^{k+1}(I_x)\), \(w^{ m} \in H_0^1(I_y) \cap H^{k+1}(I_y)\) denotes the exact solution of Equation~(\ref{E3}) and (\ref{E4}), respectively, at the $m$-th time step. Then the interpolation errors satisfy
	\begin{eqnarray}
		\|Pv^{m} - v^{m}\|_{DG} &\leq& C\big(\mathcal{N}^{-1}\ln \mathcal{N}\big)^{k}, \nonumber\\
		\|Pw^{m} - w^{m}\|_{DG}  &\leq& C\big(\mathcal{N}^{-1}\ln \mathcal{N}\big)^{k}. \nonumber
	\end{eqnarray}
\end{theorem}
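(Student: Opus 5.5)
The plan is to expand the DG norm \eqref{DG_norm} of $\rho^{*m}=Pv^{m}-v^{m}$ into its three pieces and bound each with Lemma~\ref{lemma8} and Lemma~\ref{lemma9}. Each piece is split into the coarse region ($i\le \mathcal N/2$) and the fine layer region ($i>\mathcal N/2$). Specifically,
\[
\|\rho^{*m}\|_{DG}^{2}=\|\rho^{*m}\|_{0}^{2}+\mu|\rho^{*m}|_{1}^{2}+\mu\eta_{0}\sum_{i=0}^{\mathcal N}\frac{[\rho^{*m}]_{i}^{2}}{h_i}.
\]
The $w$-estimate is identical with $x$ and $y$ interchanged, using the $y$-analogues of Lemma~\ref{lemma8} and Lemma~\ref{lemma9}. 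These hold because Lemma~\ref{lemma1} gives the same derivative bounds for $S^\dagger,E^\dagger$ in $y$ as for $S^*,E^*$ in $x$. So only the $x$-direction needs to be done.

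\emph{$L^2$ and $H^1$ parts.} For the $L^2$ part, Lemma~\ref{lemma8} with $l=0$ gives $C[\mu\mathcal N^{-2k-2}+\mathcal N^{-2k-2}+\mu(\mathcal N^{-1}\ln\mathcal N)^{2k+2}]$, which is bounded by $C(\mathcal N^{-1}\ln\mathcal N)^{2k}$. For the $H^1$ part, $l=1$ gives
\[
\mu\big[\mu^{-1}\mathcal N^{-2k-2}+\mathcal N^{-2k}+\mu^{-1}(\mathcal N^{-1}\ln\mathcal N)^{2k}\big]\le C(\mathcal N^{-1}\ln\mathcal N)^{2k}.
\]
The factor $\mu$ in front of $|\cdot|_1^2$ cancels exactly the $\mu^{-1}$ produced by the layer.

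\emph{Jump part.} This is the step needing care. At each node, $[\rho]_i^{2}\le 2\big(\|\rho\|_{0,\infty,\mathcal K_{x_i}}^{2}+\|\rho\|_{0,\infty,\mathcal K_{x_{i+1}}}^{2}\big)$; at the boundary nodes only one neighbouring element appears. Hence
\[
\sum_i h_i^{-1}[\rho]_i^2\le C\sum_i h_i^{-1}\|\rho\|^2_{0,\infty,\mathcal K_{x_i}}.
\]
Adjacent elements in the same region have equal sizes. At the transition node $x_{\mathcal N/2}$ the mesh sizes differ, but dividing by the smaller (fine) size only overestimates, so this node is absorbed into the fine-region sum. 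Now apply Lemma~\ref{lemma9} with $n=0$, which controls exactly $\sum h_i^{-1}\|\rho\|^2_{0,\infty}$. On the coarse part it gives $C[\mathcal N^{-2k}+\mu\mathcal N^{-2k}+\mu\mathcal N^{-2k}]$. On the fine part it gives $C[(\mu\mathcal N^{-1}\ln\mathcal N)^{2k}+\mu^{-1}(\mathcal N^{-1}\ln\mathcal N)^{2k}]$. Multiplying by $\mu\eta_0$ (with $\eta_0$ fixed by Table~\ref{table_1}) leaves $C(\mathcal N^{-1}\ln\mathcal N)^{2k}$, again because the prefactor $\mu$ kills the $\mu^{-1}$.

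The main obstacle I expect is this jump term at the transition point and the verification that the weights $h_i^{-1}$ are correctly paired with the sup-norms on the neighbouring elements. If the coarse element adjacent to $x_{\mathcal N/2}$ ends up divided by the fine size $h\sim\mu\mathcal N^{-1}\ln\mathcal N$, I would instead bound $\rho$ there directly. On that element the smooth part contributes $O(\mathcal N^{-k-1})$. The layer part contributes $O(\mathcal N^{-(k+1)})$, since $E^*$ and its projection are bounded there by $\exp(-\beta_x\sigma_x/\mu)\le\mathcal N^{-(k+1)}$ (using $L^\infty$-stability of the $L^2$ projection). With $\mu\,h^{-1}\sim \mathcal N/\ln\mathcal N$ this contributes $\mathcal N^{-2k-1}$, which is harmless.

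Combining the three parts and taking square roots gives $\|Pv^m-v^m\|_{DG}\le C(\mathcal N^{-1}\ln\mathcal N)^k$ with $C$ independent of $\mu$ and $\mathcal N$. The same argument gives the bound for $w^m$.
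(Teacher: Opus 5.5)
Your proposal is correct and follows the paper's own route: you split the DG norm into its $L^2$, $\mu$-weighted $H^1$-seminorm and $\mu$-weighted jump parts, bound the first two with Lemma~\ref{lemma8} and the jumps with Lemma~\ref{lemma9} at $n=0$, and let the prefactor $\mu$ cancel the $\mu^{-1}$ coming from the layer region. Your handling of the jump term is more careful than the paper's, which bounds the jump sum by the elementwise sup-norm sums without addressing the pairing of each node with its two neighbouring elements or the transition node $x_{\mathcal N/2}$.
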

\begin{proof}Based on the DG norm defined in Equation~\eqref{DG_norm}, the error can be written as follows:
	\begin{eqnarray}
		&&\|Pv^{m}-v^{m}\|_{\mathrm{DG}}^2
		=
		\mu \|Pv^{m}-v^{m}\|_{E}^2
		+
		\|Pv^{m}-v^{m}\|_{0}^2 \nonumber\\
		&&=
		\mu |Pv^{m}-v^{m}|_{1}^2
		+
		\mu\sum_{i=0}^{\mathcal{N}}\frac{\eta_{0}}{h_i}[Pv^{m}-v^{m}]_{i}^2
		+
		\|Pv^{m}-v^{m}\|_{0}^2.
		\label{eq:DG_error}
	\end{eqnarray}
	The bounds for the first and third terms in Equation~\eqref{eq:DG_error} can be established using Lemma~\ref{lemma8}. The estimate for the second term in Equation~\eqref{eq:DG_error} is derived as follows:
	\begin{eqnarray*}
		\sum_{i=0}^{\mathcal{N}}\frac{\eta_{0}}{h_i}[Pv^{m}-v^{m}]_{i}^2\leq&&	\sum_{i=0}^{\frac{\mathcal{N}}{2}}(h_i)^{-1}\|Pv^{m}-v^{m}]\|^{2}_{0,\infty,\mathcal{K}_{x_i}}\nonumber\\
		&&+\sum_{\frac{\mathcal{N}}{2}+1}^{\mathcal{N}}(h_i)^{-1}\|Pv^{m}-v^{m}\|^{2}_{0,\infty,\mathcal{K}_{x_i}}
	\end{eqnarray*}
	Using Lemma~\ref{lemma9} with $n=0$, we obtain
	\[	\sum_{i=0}^{\mathcal{N}}\frac{\eta_{0}}{h_i}[Pv^{m}-v^{m}]_{i}^2\leq C\mu^{-1} (\mathcal{N}^{-1} \ln \mathcal{N})^{2k}.\]
	By substituting the bounds for all terms into Equation~(\ref{eq:DG_error}), we obtain
	\begin{eqnarray}
		\|Pv^{m} - v^{m}\|_{DG} &\leq& C\big(\mathcal{N}^{-1}\ln \mathcal{N}\big)^{k}. \nonumber
	\end{eqnarray}
	Similarly, we get
	\begin{eqnarray}
		\|Pw^{m} - w^{m}\|_{DG} &\leq& C\big(\mathcal{N}^{-1}\ln \mathcal{N}\big)^{k}. \nonumber
	\end{eqnarray}
	This concludes the proof. 
\end{proof}
\subsection{Discretization error}\label{Discretization}
In this subsection, we derive the bound for the discretization error arising from the discretization of our continuous problem into a finite-dimensional space using the DDG method.
\begin{theorem}\label{theorem2}
	For $1\leq m\leq M$, the following bounds hold at the
	$m$-th time step:
	\[|B_{x}(\rho^{* m},\xi^{* m})|\leq C(\mathcal{N}^{-1}\ln \mathcal{N})^{k}\|\xi^{* m}\|_{DG},\]
	\[|B_{y}(\rho^{\dagger m},\xi^{\dagger m})|\leq C(\mathcal{N}^{-1}\ln \mathcal{N})^{k}\|\xi^{\dagger m}\|_{DG}.\]
\end{theorem}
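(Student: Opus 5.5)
Write $\rho=\rho^{*m}=Pv^{m}-v^{m}$ (or its negative; the sign is irrelevant) and $\xi=\xi^{*m}\in\mathcal{W}_{\mathcal N}^k(I_x)$, with $y=y_j$ fixed. I will bound $B_x(\rho,\xi)$ by splitting it into its five constituent terms and treating each with Cauchy--Schwarz, using Lemma~\ref{lemma8} and Lemma~\ref{lemma9} for the factors involving $\rho$. Each factor involving $\xi$ will be absorbed into $\|\xi\|_{DG}$ via the definition \eqref{DG_norm} and inverse/trace inequalities on $\mathcal W^k_{\mathcal N}$. The $y$-direction estimate is identical, with $b_2,a_2$ and the $y$-mesh. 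Throughout I split every sum into the coarse part $i\le \mathcal N/2$ and the fine part $i>\mathcal N/2$, and use the Shishkin mesh bounds on $h_i$.

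\textbf{Volume terms.} For the diffusion term, $\mu\sum_i\int \rho_x\xi_x \le \mu^{1/2}|\rho|_1\,\mu^{1/2}|\xi|_1$. By Lemma~\ref{lemma8} with $l=1$, $\mu|\rho|_1^2\le C[\mathcal N^{-2(k+1)}+\mu\mathcal N^{-2k}+(\mathcal N^{-1}\ln\mathcal N)^{2k}]$, which gives $C(\mathcal N^{-1}\ln\mathcal N)^k\|\xi\|_{DG}$. For the reaction term $\int(a_1-\partial_xb_1)\rho\xi$, I use $\|\rho\|_0\|\xi\|_0$ together with Lemma~\ref{lemma8} with $l=0$.

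The convection volume term $\sum_i\int b_1\rho\,\xi_x$ needs more care, because $\xi_x$ is only controlled with a factor $\mu^{1/2}$. Here I use the $L^2$-projection property. Write $b_1=\bar b_{1,i}+(b_1-\bar b_{1,i})$ with $\bar b_{1,i}$ the element mean. Since $\xi_x\in\mathbb P^{k-1}$, the constant part is annihilated by orthogonality. The remainder is bounded by $Ch_i\|\rho\|_{\mathcal K}\|\xi_x\|_{\mathcal K}\le C\|\rho\|_{\mathcal K}\|\xi\|_{\mathcal K}$ by the inverse inequality, and this is again controlled by Lemma~\ref{lemma8} with $l=0$.

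\textbf{Interface terms.} These are the main obstacle. They are
\[
\mu\sum_i \widehat{\rho}_i[\xi]_i,\qquad \mu\sum_i[\rho]_i\{\xi_x\}_i,\qquad \sum_i b_1\widetilde{\rho}_i[\xi]_i .
\]
For the first, expand $\widehat\rho$ via \eqref{flux1} and bound $\mu|\widehat\rho_i||[\xi]_i|\le \big(\mu h_i\widehat\rho_i^2\big)^{1/2}\big(\mu h_i^{-1}[\xi]_i^2\big)^{1/2}$. Then $\mu\sum h_i\widehat\rho_i^2\le C\mu\sum_{n=0}^2 h_i^{2n-1}\|\rho\|_{n,\infty,\mathcal K_{x_i}}^2$, and Lemma~\ref{lemma9} controls this with $n=0,1,2$. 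The extra factor $\mu$ cancels the $\mu^{-1}$ and $\mu^{1-2n}$ powers appearing in that lemma, leaving $O((\mathcal N^{-1}\ln\mathcal N)^{2k})$.

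For the second, the trace inverse inequality gives $h_i|\{\xi_x\}_i|^2\le C\|\xi_x\|^2$ on neighbouring elements. Hence the term is bounded by $(\mu\sum h_i^{-1}[\rho]^2)^{1/2}(\mu|\xi|_1^2)^{1/2}$, and the first factor is Lemma~\ref{lemma9} with $n=0$.

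The convection flux term is the delicate one, since only $\mu^{1/2}h_i^{-1/2}[\xi]$ is controlled and naive Cauchy--Schwarz gives $\mu^{-1/2}$. I plan to use a trace inequality on $\xi$ instead: $|[\xi]_i|^2\le Ch_i^{-1}\|\xi\|^2_{\mathcal K}$ summed over neighbouring elements. Combined with $\|\rho\|_{0,\infty,\mathcal K_{x_i}}$, this leads to bounding $\sum_i h_i^{-1}\|\rho\|_{0,\infty}^2$ times $\|\xi\|_0^2$. In the coarse region this gives $C\mathcal N^{-2k}$ by Lemma~\ref{lemma9}. In the layer region, where Lemma~\ref{lemma9} only yields $\mu^{-1}(\mathcal N^{-1}\ln\mathcal N)^{2k}$, I would instead pair with $\mu h_i^{-1}[\xi]^2$ and use the sharper pointwise bound from Lemma~\ref{lemma1}. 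That bound is $\|\rho\|_{0,\infty,\mathcal K_{x_i}}\le C(\mathcal N^{-1}\ln\mathcal N)^{k+1}$ in the fine mesh, since $h_i^{k+1}\mu^{-(k+1)}$ with $h_i\le C\mu\mathcal N^{-1}\ln\mathcal N$. This gives
\[
\Big(\sum_{i>\mathcal N/2}\mu^{-1}h_i\|\rho\|_{0,\infty}^2\Big)^{1/2}\le C(\mathcal N^{-1}\ln\mathcal N)^{k+1}.
\]
This handles the layer part. Collecting all terms yields $|B_x(\rho^{*m},\xi^{*m})|\le C(\mathcal N^{-1}\ln\mathcal N)^k\|\xi^{*m}\|_{DG}$.
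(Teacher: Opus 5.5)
Your decomposition is essentially the paper's: the five pieces of $B_x(\rho,\xi)$, Cauchy--Schwarz with weights matched to $\|\cdot\|_{DG}$, and Lemmas~\ref{lemma8}--\ref{lemma9} on the coarse and fine parts. Your orthogonality argument for $\int b_1\rho\,\xi_x$ gives $C\|\rho\|_0\|\xi\|_0$ with no mesh splitting, which is an improvement on the paper's inverse-inequality split.

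The diffusive flux step, however, fails as written. The factor $\mu$ does \emph{not} cancel $\mu^{1-2n}$ when $n=2$: the coarse part of Lemma~\ref{lemma9} leaves $\mu\cdot\mu^{-3}\mathcal N^{-2k-4}=\mu^{-2}\mathcal N^{-2k-4}$. A sharper lemma would not fix this.
\begin{itemize}
\item Since $v$ is smooth, $[\rho_{xx}]_i=[(Pv)_{xx}]_i$, which is harmless except at the transition node $x_{\mathcal N/2}$.
\item At that node \eqref{flux1} uses the coarse step $h_{\mathcal N/2}=x_{\mathcal N/2}-x_{\mathcal N/2-1}$.
\item For $k\ge2$ and a genuine layer (say $b_1\equiv\beta_x$), the projection on the first fine element has $(Pv)_{xx}^+\sim\mu^{-2}\mathcal N^{-(k+1)}$.
\item Hence $\mu\eta_1 h_{\mathcal N/2}[\rho_{xx}]_{\mathcal N/2}\sim\mu^{-1}\mathcal N^{-k-2}$.
\end{itemize}
Now take $\xi$ linear from $1$ to $0$ on $[1-\sigma_x,1]$ and zero elsewhere. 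Then $[\xi]_{\mathcal N/2}=1$ and $\|\xi\|_{DG}\le C$ for $\mu\le\mathcal N^{-1}$, while every other piece of $B_x(\rho,\xi)$ is $O((\mathcal N^{-1}\ln\mathcal N)^k)$. Your argument is valid for arbitrary $\xi\in\mathcal W^k_{\mathcal N}(I_x)$, so it cannot give a $\mu$-uniform bound when $k\ge2$.

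The paper's proof asserts the same estimate ``together with Lemma~\ref{lemma9}'' and has the identical hole. It closes for $k=1$, where $[\rho_{xx}]\equiv0$, or if the flux at $x_{\mathcal N/2}$ used the fine step size.

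The same node also breaks your other interface bounds as stated, though these are repairable.
\begin{itemize}
\item At $x_{\mathcal N/2}$, the inverse trace inequality on the fine neighbour produces $h_{\mathcal N/2+1}^{-1}\sim\mu^{-1}\mathcal N/\ln\mathcal N$.
\item This affects both $h_i|\{\xi_x\}_i|^2\le C\|\xi_x\|^2$ and your coarse-part treatment of $b_1\widetilde\rho[\xi]$.
\item Pairing instead with $\mu h_{\mathcal N/2}^{-1}[\xi]^2$ costs $(\mu^{-1}h_{\mathcal N/2})^{1/2}$, which is also not uniform.
\end{itemize}
Handle this node separately. Use $|\xi^-(x_{\mathcal N/2})|\le C\mathcal N^{1/2}\|\xi\|_0$, and telescope through the layer using $[\xi]_{\mathcal N}=-\xi(1)$:
\[
|\xi^+(x_{\mathcal N/2})|\le\sum_{i>\mathcal N/2}|[\xi]_i|+\sigma_x^{1/2}|\xi|_1\le C(\ln\mathcal N)^{1/2}\|\xi\|_{DG}.
\]
You also have $\mu|\{\xi_x\}_{\mathcal N/2}|\le C(\mathcal N/\ln\mathcal N)^{1/2}\|\xi\|_{DG}$. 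Combine these with two facts at that node:
\begin{itemize}
\item $|\widetilde\rho_{\mathcal N/2}|+|[\rho]_{\mathcal N/2}|\le C(\mathcal N^{-1}\ln\mathcal N)^{k+1}$;
\item the layer part of $\mu\{\rho_x\}_{\mathcal N/2}$ is $O(\mathcal N^{-(k+1)})$, and its smooth part pairs with $\mu h_{\mathcal N/2}^{-1}[\xi]^2$ as usual.
\end{itemize}
This repair cannot rescue the $\eta_1$ term, because there $\mu^{-1}\mathcal N^{-k-2}$ multiplies a jump that may be $O(1)$.

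A minor point: your layer bound for the convective flux loses a harmless factor $(\ln\mathcal N)^{1/2}$, since $\sum_{i>\mathcal N/2}h_i=\sigma_x\sim\mu\ln\mathcal N$.
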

\begin{proof}
	We have the following 	Let $\widehat{\rho^{*m}}|_{i}=\frac{\partial \rho^{*m}_{i}}{\partial x},$ for $i={0,\mathcal{N}}$ and 
	\begin{eqnarray*}
		&&\widehat{\rho^{*m}}|_{i}=	\eta_{0}\frac{[\rho^{*m}]_{i}}{h_{i}}
		+\bigg\{\frac{\partial \rho^{*m}}{\partial x}\bigg\}_{i}
		+\eta_{1}h_{i}\bigg[\frac{\partial^{2} \rho^{*m}}{\partial x^{2}}\bigg]_{i},\quad
		i=1,\ldots,\mathcal{N}-1,
	\end{eqnarray*}
	\begin{eqnarray*}
		&&B_{x}(\rho^{*m},\xi^{*m})=	\mu\sum_{i=1}^{\mathcal{N}}\int_{\mathcal{K}_{x_{i}}}\frac{\partial{\rho^{*m}}}{\partial{x}}\frac{\partial{\xi^{*m}}}{\partial{x}}dx+\mu\sum_{i=0}^{\mathcal{N}}\bigg(\widehat{\rho^{*m}}|_{i}[\xi^{*m}]_{i}\nonumber\\&&+[\rho^{*m}]_{i}\bigg\{\frac{\partial{\xi^{*m}}}{\partial{x}}\bigg\}_{i}\bigg)-\sum_{i=1}^{\mathcal{N}}\int_{\mathcal{K}_{x_{i}}}b_{1}(x,y_{j})\rho^{*m}\frac{\partial{\xi^{*m}}}{\partial{x}}dx-\sum_{i=0}^{\mathcal{N}}\big(b_{1}(x,y_{j})\widetilde{\rho^{*m}}[\xi^{*m}]\big)_{i}\nonumber\\&&+\sum_{i=1}^{\mathcal{N}}\int_{\mathcal{K}_{x_{i}}}\bigg(a_{1}(x,y_{j})-\frac{\partial b_{1}(x,y_{j})}{\partial x}\bigg)\rho^{*m}\xi^{*m}dx,\nonumber
	\end{eqnarray*}
	For the first term of $B_{x}(\rho^{*m},\xi^{*m})$, the definition of the DG norm together with the interpolation error  estimate yields
	\begin{eqnarray*}
		&&\mu\sum_{i=1}^{\mathcal{N}}\int_{\mathcal{K}_{x_{i}}}\frac{\partial{\rho^{*m}}}{\partial{x}}\frac{\partial{\xi^{*m}}}{\partial{x}}dx\leq C\mu^{1/2}(\mathcal{N}^{-1}\ln \mathcal{N})^{k}\|\xi^{*m}\|_{E}.
	\end{eqnarray*}
	For the Second term of $B_{x}(\rho^{*m},\xi^{*m})$, the definition of the DG norm together with  Lemma~\ref{lemma9}  and the interpolation error  estimate yields
	\begin{align*}
		&\mu\sum_{i=0}^{\mathcal{N}}\left(\widehat{\rho^{*m}}\Big|_{i}[\xi^{*m}]_{i} + [\rho^{*m}]_{i}\left\{\frac{\partial{\xi^{*m}}}{\partial{x}}\right\}_{i}\right) \\
		&\quad \le \mu \left[\sum_{i=1}^{\mathcal{N}}\left(\frac{\eta_{0}}{h_{i}}[\rho^{*m}]^{2}_{i} + h_{i}\left\{\frac{\partial \rho^{*m}}{\partial x}\right\}^{2}_{i} + \frac{\eta^{2}_{1}}{\eta_{0}}h^{3}_{i}\left[\frac{\partial^{2} \rho^{*m}}{\partial x^{2}}\right]^{2}_{i}\right)\right]^{1/2} \\
		&\quad\qquad \times \left[ \left(\sum_{i=1}^{\mathcal{N}}\frac{\eta_{0}}{h_{i}}[\xi^{*m}]^{2}_{i}\right)^{1/2} + \left(\sum_{i=1}^{\mathcal{N}}\frac{h_{i}}{\eta_{0}}\left\{\frac{\partial{\xi^{*m}}}{\partial{x}}\right\}^{2}_{i}\right)^{1/2} \right] \\
		&\quad \le C\mu^{1/2}(\mathcal{N}^{-1}\ln \mathcal{N})^{k}\|\xi^{*m}\|_{E}.
	\end{align*}
	For the third term of $B_{x}(\rho^{*m},\xi^{*m})$, the definition of the DG norm together with the inverse inequality and  the interpolation error  estimate yields
	\begin{align*}
		&\sum_{i=1}^{\mathcal{N}}\int_{\mathcal{K}_{x_{i}}}b_{1}(x,y_{j})\rho^{*m}\frac{\partial{\xi^{*m}}}{\partial{x}}\,dx 
		\le \left[\left(\sum_{i=1}^{\frac{\mathcal{N}}{2}}h^{-2}_i\|\rho^{*m}\|^{2}_{0,\mathcal{K}_{x_{i}}}\right)^{1/2}\left(\sum_{i=1}^{\frac{\mathcal{N}}{2}}h^{2}_i\|\xi^{*m}\|^{2}_{1,\mathcal{K}_{x_{i}}}\right)^{1/2} \right. \nonumber \\
		&\quad \left. + \left(\sum_{i=\frac{\mathcal{N}}{2}+1}^{\mathcal{N}}\|\rho^{*m}\|^{2}_{0,\mathcal{K}_{x_{i}}}\right)^{1/2}\left(\sum_{i=\frac{\mathcal{N}}{2}+1}^{\mathcal{N}}\|\xi^{*m}\|^{2}_{1,\mathcal{K}_{x_{i}}}\right)^{1/2}\right] \nonumber \\
		&\le C\left[\mathcal{N}^{-k}\|\xi^{*m}\|_{0} + \mu^{1/2}(\mathcal{N}^{-1}\ln \mathcal{N})^{k+1}\|\xi^{*m}\|_{E}\right].
	\end{align*}
	For the fourth term of $B_{x}(\rho^{*m},\xi^{*m})$, the definition of the DG norm together with the inverse inequality and the interpolation error  estimate yields
	\begin{eqnarray*}
		\sum_{i=0}^{\mathcal{N}}\big(b_{1}(x,y_{j})\widetilde{\rho^{*m}}[\xi^{*m}]\big)_{i}\leq&& \bigg(\sum_{i=1}^{\frac{\mathcal{N}}{2}}h^{-1}_i(\widetilde{\rho^{*m}})^{2}_i\bigg)^{1/2}\bigg(\sum_{i=1}^{\frac{\mathcal{N}}{2}}h_i[\xi^{*m}]_{i}^{2}\bigg)^{1/2}\nonumber\\
		&& \qquad+\bigg(\sum_{i=\frac{\mathcal{N}}{2}+1}^{\mathcal{N}}h_i(\widetilde{\rho^{*m}})^{2}_i\bigg)^{1/2}\bigg(\sum_{i=\frac{\mathcal{N}}{2}+1}^{\mathcal{N}}\frac{\eta_{0}}{h_{i}}[\xi^{*m}]_{i}^{2}\bigg)^{1/2}\nonumber\\
	\end{eqnarray*}
	\begin{eqnarray*}
		&&\leq \bigg(\sum_{i=1}^{\mathcal{N}}h^{-1}_i\|\rho^{*m}\|^{2}_{0,\infty,\mathcal{K}_{x_{i}}}\bigg)^{1/2}\|\xi^{*m}\|_{0}+\bigg(\sum_{i=\frac{\mathcal{N}}{2}+1}^{\mathcal{N}}h_i\|\rho^{*m}\|^{2}_{0,\infty,\mathcal{K}_{x_{i}}}\bigg)^{1/2}\|\xi^{*m}\|_{E}\nonumber\\
		&&\leq C\left[\mathcal{N}^{-k}\|\xi^{*m}\|_{0}+\mu^{1/2}(\mathcal{N}^{-1}\ln \mathcal{N})^{(k+1)}\|\xi^{*m}\|_{E}\right].
	\end{eqnarray*}
	We now proceed to compute the final term of $B_{x}(\rho^{*m},\xi^{*m})$, the definition of the DG norm together with the interpolation error estimate yields
	\begin{eqnarray*}
		&&\sum_{i=1}^{\mathcal{N}}\int_{\mathcal{K}_{x_{i}}}\bigg(a_{1}(x,y_{j})-\frac{\partial b_{1}(x,y_{j})}{\partial x}\bigg)\rho^{*m}\xi^{*m}dx\leq C\mu^{1/2}(\mathcal{N}^{-1}\ln \mathcal{N})^{(k+1)}\|\xi^{*m}\|_{0}.
	\end{eqnarray*}
	Combining all, we get the required result. 
	\end{proof}
\begin{lemma}\label{L9}
	Let $Pv^{m}$ represent the $L^{2}$-projection corresponding to the solution $v^{m}$ of \eqref{E5} and $v^{m}_{h}(x,y_{j})$
	be the numerical solution of Equation \eqref{E10}, respectively. then for $\phi_h\in \mathcal{W}^{k}_N(I_{x}),$ we have
	\begin{eqnarray*}\label{E25}
		(\partial_{\theta}\xi^{* m},\phi_h)+B_{x}(\xi^{* m},\phi_h)= \bigg(\partial_{\theta}Pv^{m}-\frac{\partial v^{m}}{\partial t},\phi_h\bigg)-B_{x}(\rho^{* m},\phi_h),
	\end{eqnarray*}
	where $\partial_{\theta}\xi^{* m}=\bigg(\frac{\xi^{* m}-\xi^{* m-1}}{\Delta t}\bigg)$.
\end{lemma}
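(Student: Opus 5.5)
The identity is the standard error equation obtained from Galerkin orthogonality. I would derive it by subtracting the fully discrete scheme \eqref{E10} from a weak form satisfied by the exact solution. The ingredients are the splitting of the error already introduced in Section~\ref{sec(2.4)},
\[
v^{m}-v^{m}_{h}=\rho^{*m}+\xi^{*m},\qquad \rho^{*m}=v^{m}-Pv^{m},\quad \xi^{*m}=Pv^{m}-v^{m}_{h},
\]
together with the sign convention that makes $B_x(\xi^{*m},\phi_h)$ appear with the correction $-B_x(\rho^{*m},\phi_h)$. Since $B_x$ is bilinear, $B_x(Pv^{m},\phi_h)=B_x(v^{m},\phi_h)-B_x(\rho^{*m},\phi_h)$, provided $B_x(v^m,\cdot)$ is meaningful. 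This is where consistency of the DDG form enters.

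\textbf{Step 1: consistency of $B_x$.} For fixed $y_j$, the smooth solution $v^{m}$ of \eqref{E5} lies in $H^1_0(I_x)\cap H^{k+1}(I_x)$. I would check that $B_x(v^m,\phi_h)=(\mathcal{L}_{x,\mu}v^m,\phi_h)$ for every $\phi_h\in\mathcal{W}^k_{\mathcal N}(I_x)$.

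Because $v^m$ and its derivatives are continuous, all interior jumps $[v^m]_i$ and $[\partial_x^2 v^m]_i$ vanish. Hence by \eqref{flux1} the flux reduces to $\widehat{v^m}=\partial_x v^m$, and by \eqref{flux3} $\widetilde{v^m}=v^m$ (since $\theta+(1-\theta)=1$). At the boundary nodes, $v^m(x_0)=v^m(x_{\mathcal N})=0$, so $[v^m]_0=[v^m]_{\mathcal N}=0$.

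Elementwise integration by parts of $\mu\int \partial_x v^m\,\partial_x\phi_h$ and of $-\int b_1 v^m\,\partial_x\phi_h$ then produces boundary contributions. With the paper's jump convention (including $[\cdot]_{x_0}=\cdot(x_0)$ and $[\cdot]_{x_{\mathcal N}}=-\cdot(x_{\mathcal N})$), these are exactly cancelled by the face terms $\mu\widehat{v^m}[\phi_h]$ and $-b_1\widetilde{v^m}[\phi_h]$. The reaction term $(a_1-\partial_x b_1)v^m\phi_h$ combines with $\int\partial_x(b_1 v^m)\phi_h$ to give $b_1\partial_x v^m+a_1 v^m$.

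\textbf{Step 2: subtract the discrete scheme.} Evaluating the semi-discrete equation \eqref{E5} at $t_m$ and testing with $\phi_h$ gives
\[
\Big(\tfrac{\partial v^m}{\partial t},\phi_h\Big)+B_x(v^m,\phi_h)=(r_1^m,\phi_h).
\]
Subtracting \eqref{E10}, with $v^{m-1}_h$ the discrete initial value, gives
\[
\Big(\tfrac{\partial v^m}{\partial t}-\partial_\theta v_h^m,\phi_h\Big)+B_x(v^m-v^m_h,\phi_h)=0.
\]

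\textbf{Step 3: insert the projection.} Write $\partial_\theta\xi^{*m}=\partial_\theta Pv^m-\partial_\theta v^m_h$. Then
\[
\partial_\theta v^m_h-\tfrac{\partial v^m}{\partial t}=\partial_\theta Pv^m-\tfrac{\partial v^m}{\partial t}-\partial_\theta\xi^{*m}.
\]
Substituting this into Step 2, replacing $B_x(v^m-v^m_h,\phi_h)$ by $B_x(\rho^{*m},\phi_h)+B_x(\xi^{*m},\phi_h)$, and rearranging gives the claimed identity.

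\textbf{Main obstacle.} The real work is the boundary bookkeeping in Step 1, where the signs of the face terms must match the paper's jump conventions at $x_0$ and $x_{\mathcal N}$. Two points need care there: the choice $\widetilde{v^m}|_0=0$, which is consistent because $v^m(x_0)=0$, and the term $\widetilde{v^m}|_{\mathcal N}=v^m_{\mathcal N}$, which vanishes by the boundary condition. I would try to verify that all these boundary contributions cancel exactly. If some sign turns out to fail, I would absorb the discrepancy using $v^m=0$ on $\partial I_x$. The time-derivative term needs no estimate here; it is simply carried along as the consistency residual $\partial_\theta Pv^m-\partial_t v^m$.
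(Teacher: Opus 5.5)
Your proposal is correct and follows the same route as the paper: test the time-continuous problem \eqref{E5} at $t_m$ with $\phi_h$, add and subtract the projected difference quotient $\partial_\theta Pv^m$, and subtract the scheme \eqref{E10}, with $\rho^{*m}=v^m-Pv^m$ and $\xi^{*m}=Pv^m-v^m_h$. Your Step 1 makes explicit the consistency identity $B_x(v^m,\phi_h)=(\mathcal{L}_{x,\mu}v^m,\phi_h)$, which the paper uses silently; the face terms do cancel exactly under the paper's jump conventions at $x_0$ and $x_{\mathcal N}$, so your fallback of absorbing a sign discrepancy is not needed.
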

\begin{proof} Testing the first equation of \eqref{E5} by a test function $\phi_h\in \mathcal{W}^{k}_N(I_{x})$, we obtain 
	\[\bigg(\frac{\partial v^{m}}{\partial t},\phi_h\bigg)+B_{x}(v^{m},\phi_h)=(r^{m}_{1},\phi_h)\]
	Adding both sides of the above equation $\bigg(\frac{Pv^{m}-Pv^{m-1}}{\Delta t},\phi_h\bigg)$, we get
	\[\bigg(\frac{Pv^{m}-Pv^{m-1}}{\Delta t},\phi_h\bigg)+B_{x}(v^{m},\phi_h)=(r^{m}_{1},\phi_h)+\bigg(\frac{Pv^{m}-Pv^{m-1}}{\Delta t}-\frac{\partial v^{m}}{\partial t},\phi_h\bigg)\]
	Now subtracting Equation \eqref{E10} from the above equation, we get 
	\[	(\partial_{\theta}\xi^{* m},\phi_h)+B_{x}(\xi^{* m},\phi_h)= \bigg(\partial_{\theta}Pv^{m}-\frac{\partial v^{m}}{\partial t},\phi_h\bigg)-B_{x}(\rho^{* m},\phi_h).\]
	Thus, we conclude the proof.
\end{proof}
\begin{lemma}\label{L10}
	Let $Pw^{m}$ represent the $L^{2}$-projection corresponding to the solution $w^{m}$ of \eqref{E7} and $w^{m}_{h}(x_{i},y)$
	be the numerical solution of Equation \eqref{E12}, respectively. then for $\psi_h\in \mathcal{W}^{k}_N(I_{y}),$ we have
	\begin{eqnarray}
		(\partial_{\theta}\xi^{\dagger m},\psi_h)+B_{y}(\xi^{\dagger m},\psi_h)= \bigg(\partial_{\theta}Pw^{ m}-\frac{\partial w^{ m}}{\partial t},\psi_h\bigg)-B_{y}(\rho^{\dagger m},\psi_h),\nonumber
	\end{eqnarray}
	where $\partial_{\theta}\xi^{\dagger m}=\bigg(\frac{\xi^{\dagger m}-\xi^{\dagger m-1}}{\Delta t}\bigg)$.
\end{lemma}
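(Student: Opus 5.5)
The plan is to mirror the proof of Lemma~\ref{L9} in the $y$-direction, with $w^{m}$, $Pw^{m}$, $w^{m}_{h}$ and $B_{y}$ in place of their $x$-counterparts. Throughout, $\rho^{\dagger m}=w^{m}-Pw^{m}$ and $\xi^{\dagger m}=Pw^{m}-w^{m}_{h}$, so that $w^{m}-w^{m}_{h}=\rho^{\dagger m}+\xi^{\dagger m}$.

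First I fix $x_i\in\overline{I}_x$ and test the first equation of \eqref{E7} with $\psi_h\in\mathcal{W}^{k}_{\mathcal N}(I_y)$. I integrate by parts elementwise in $y$ and sum over $\mathcal{K}_{y_j}$. Because the exact solution $w^{m}$ is smooth in $y$ and vanishes on $\partial I_y$, its jumps $[w^{m}]_j$ and $[\partial_{yy}w^{m}]_j$ vanish at interior nodes. Hence $\widehat{w^{m}}|_j=\partial_y w^{m}(y_j)$ and $\widetilde{w^{m}}|_j=w^{m}(y_j)$, and the DDG form is consistent. This yields
\[
\Big(\frac{\partial w^{m}}{\partial t},\psi_h\Big)+B_{y}(w^{m},\psi_h)=(r^{m}_{2},\psi_h).
\]
This consistency check is the only step requiring care. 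The boundary conventions for the jump at $y_0,y_{\mathcal N}$ and the flux choices $\widetilde{w}|_0=0$, $\widetilde{w}|_{\mathcal N}=w_{\mathcal N}$ must reproduce the boundary terms of the integration by parts. Given the homogeneous Dirichlet data, I expect them to do so exactly as in the $x$-direction case.

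Next I add $\big(\frac{Pw^{m}-Pw^{m-1}}{\Delta t},\psi_h\big)$ to both sides and move the term $\big(\frac{\partial w^{m}}{\partial t},\psi_h\big)$ to the right-hand side. This gives
\[
\Big(\frac{Pw^{m}-Pw^{m-1}}{\Delta t},\psi_h\Big)+B_{y}(w^{m},\psi_h)=(r^{m}_{2},\psi_h)+\Big(\partial_{\theta}Pw^{m}-\frac{\partial w^{m}}{\partial t},\psi_h\Big).
\]

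Then I subtract the discrete scheme \eqref{E12}. The source terms $(r^{m}_2,\psi_h)$ cancel. The time-difference terms combine into $(\partial_\theta\xi^{\dagger m},\psi_h)$, and by bilinearity
\[
B_y(w^{m}-w^{m}_h,\psi_h)=B_y(\rho^{\dagger m},\psi_h)+B_y(\xi^{\dagger m},\psi_h).
\]
Moving $B_y(\rho^{\dagger m},\psi_h)$ to the right-hand side gives exactly the claimed identity.
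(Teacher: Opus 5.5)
Your proposal is correct and follows the paper's route. The paper proves this lemma by repeating the argument of Lemma~\ref{L9} in the $y$-direction: test \eqref{E7} with $\psi_h$, add $\big(\frac{Pw^{m}-Pw^{m-1}}{\Delta t},\psi_h\big)$ to both sides, subtract \eqref{E12}, and split $w^{m}-w^{m}_{h}=\rho^{\dagger m}+\xi^{\dagger m}$. You also check explicitly that $B_y$ is consistent for the exact solution: the jumps vanish, the fluxes reduce to $\partial_y w^{m}$ and $w^{m}$, and the homogeneous boundary data take care of the terms at $y_0$ and $y_{\mathcal N}$. The paper only asserts this step, so your check makes the argument more complete.
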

\begin{proof} By applying the methodology outlined in Lemma \ref{L9}. Thus, we obtain the desired result.
\end{proof}
\begin{theorem}\label{theorem3}
	Let $Pv$ and $Pw$ be the $L^{2}$-projection of the solution $v$ and $w$, respectively, and $v^{m}_{h}(x,y_{j})$ and $w^{m}_{h}(x_{i},y)$ be the numerical solution of the Equation \eqref{E10} and \eqref{E12}, respectively. Assume that $v\in H^{k+1}(I_x)$ and $w\in H^{k+1}(I_y).$  Then we have
	\begin{eqnarray}
		\big\|\xi^{*m}(x,y_{j})\big\|_{0}^{2}+	C\Delta t\sum_{l=1}^{m}\big\|\xi^{* l}(x,y_{j})\big\|_{DG}^{2}\leq C\bigg((\Delta t)^{2}\int_{0}^{t_{m}}\big\|v_{tt}\big\|^{2}ds +(\mathcal{N}^{-1}\ln \mathcal{N})^{2k} \bigg),\nonumber\\
		\big\|\xi^{\dagger m}(x_{i},y)\big\|_{0}^{2}+C	\Delta t\sum_{l=1}^{m}\big\|\xi^{\dagger l}(x_{i},y)\big\|_{DG}^{2}\leq C\bigg((\Delta t)^{2}\int_{0}^{t_{m}}\big\|w_{tt}\big\|^{2}ds +(\mathcal{N}^{-1}\ln \mathcal{N})^{2k} \bigg).\nonumber
	\end{eqnarray}
\end{theorem}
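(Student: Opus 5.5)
The plan is the standard energy argument applied to the error equation of Lemma~\ref{L9} for the $x$-direction. The $y$-direction then follows verbatim from Lemma~\ref{L10}.

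\textbf{Step 1: test the error equation.} Fix $y_j$ and take $\phi_h=\xi^{*m}$ in Lemma~\ref{L9}. The identity $(a-b)a=\tfrac12(a^2-b^2+(a-b)^2)$ gives
\[
(\partial_\theta\xi^{*m},\xi^{*m})\ \ge\ \frac{1}{2\Delta t}\bigl(\|\xi^{*m}\|_0^2-\|\xi^{*m-1}\|_0^2\bigr).
\]
Lemma~\ref{Coercivity} (Coercivity) gives $B_x(\xi^{*m},\xi^{*m})\ge\lambda\|\xi^{*m}\|_{DG}^2$.

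\textbf{Step 2: the spatial right-hand term.} Theorem~\ref{theorem2} bounds $|B_x(\rho^{*m},\xi^{*m})|\le C(\mathcal N^{-1}\ln\mathcal N)^k\|\xi^{*m}\|_{DG}$. Young's inequality then gives
\[
|B_x(\rho^{*m},\xi^{*m})|\ \le\ \frac{\lambda}{4}\|\xi^{*m}\|_{DG}^2+C(\mathcal N^{-1}\ln\mathcal N)^{2k}.
\]

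\textbf{Step 3: the temporal right-hand term.} Split it as
\[
\partial_\theta Pv^m-v_t^m=\Bigl(\partial_\theta v^m-v_t^m\Bigr)+P\partial_\theta v^m-\partial_\theta v^m .
\]
\begin{itemize}
\item Since $(P\partial_\theta v^m-\partial_\theta v^m,\xi^{*m})=0$ by the definition of the $L^2$-projection, the second difference drops out entirely. This is the main convenience of using $P$.
\item For the first difference, Taylor's formula with integral remainder gives $\partial_\theta v^m-v_t^m=-\frac{1}{\Delta t}\int_{t_{m-1}}^{t_m}(s-t_{m-1})v_{tt}(s)\,ds$. Hence
\[
\|\partial_\theta v^m-v^m_t\|_0^2\ \le\ C\Delta t\int_{t_{m-1}}^{t_m}\|v_{tt}\|_0^2\,ds .
\]
\item Cauchy--Schwarz and Young, together with $\|\xi\|_0\le\|\xi\|_{DG}$, then give
\[
\Bigl|\bigl(\partial_\theta v^m-v_t^m,\xi^{*m}\bigr)\Bigr|\ \le\ \frac{\lambda}{4}\|\xi^{*m}\|_{DG}^2+C\Delta t\int_{t_{m-1}}^{t_m}\|v_{tt}\|^2\,ds .
\]
\end{itemize}

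\textbf{Step 4: sum in time.} Multiply by $2\Delta t$ and absorb the $\frac{\lambda}{4}$ terms on the left. Then sum over $l=1,\dots,m$; the $L^2$ differences telescope, and the $C(\mathcal N^{-1}\ln\mathcal N)^{2k}$ contributions sum to at most $T\,C(\mathcal N^{-1}\ln\mathcal N)^{2k}$. The result is
\[
\|\xi^{*m}\|_0^2+\lambda\Delta t\sum_{l=1}^m\|\xi^{*l}\|_{DG}^2\le\|\xi^{*0}\|_0^2+C(\Delta t)^2\int_0^{t_m}\|v_{tt}\|^2\,ds+C(\mathcal N^{-1}\ln\mathcal N)^{2k}.
\]
No Gronwall argument is needed, because coercivity already supplies the $L^2$ part of the DG norm. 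If one instead only had an energy-type coercivity, a discrete Gronwall lemma would be invoked, with constants depending on $T$ only.

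\textbf{Step 5: the initial term (main obstacle).} Everything hinges on $\|\xi^{*0}\|_0$, i.e.\ on how the initial data of each substep is discretized.
\begin{itemize}
\item For $m=1$, choose $v_h^0=Pu_0$; then $\xi^{*0}=0$.
\item At later steps the initial datum of the $x$-step is the previous $w_h^{m-1}$. One must check that the splitting coupling ($v^{m-1}=u^{m-1}$, and the $y$-step started from $v^m$) keeps the $\xi$-errors consistent across the substeps, since the theorem is stated per direction.
\item Treating $\xi^{*0}$ as $Pu^0-u_h^0=0$, as in the paper's setting, closes the estimate.
\item Any mismatch between $P$ in $x$ and the tensor-product data would instead contribute an extra projection error of order $(\mathcal N^{-1}\ln\mathcal N)^{k}$ via Theorem~\ref{theorem1}, which is of the same order and is absorbed.
\end{itemize}

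The same steps with Lemma~\ref{L10} yield the bound for $\xi^{\dagger m}$.
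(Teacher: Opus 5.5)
Your proposal is correct and is essentially the paper's proof: you test the error equation of Lemma~\ref{L9} with $\xi^{*m}$, use coercivity and Theorem~\ref{theorem2} with Young's inequality, bound the backward-Euler truncation by the Taylor remainder $\frac{\Delta t}{3}\int_{t_{m-1}}^{t_m}\|v_{tt}\|^2\,ds$, and telescope using $\xi^{*0}=0$. The paper likewise simply assumes $\xi^{*0}=0$ and never addresses the substep-coupling issue you raise in Step 5. The one real difference is that you remove $(\partial_\theta Pv^m-\partial_\theta v^m,\xi^{*m})$ exactly by $L^2$-orthogonality, since $\partial_\theta Pv^m=P\partial_\theta v^m$ and $\xi^{*m}\in\mathcal{W}^k_{\mathcal{N}}(I_x)$, whereas the paper bounds this term ($T_1$) through an unproved estimate on $\|\partial_t(Pv-v)\|_{L^\infty(0,T;L^2(I_x))}$, so your version is slightly cleaner and needs less.
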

\begin{proof} Taking $\phi_h=\xi^{*m}$, in Lemma~\ref{L9}, we obtain
	\[(\partial_{\theta}\xi^{*m},\xi^{*m})+B_{x}(\xi^{*m},\xi^{*m})= \bigg(\partial_{\theta}Pv^{m}-\frac{\partial v^{m}}{\partial t},\xi^{*m}\bigg)-B_{x}(\rho^{* m},\xi^{*m}).\]
	By the coercivity of $B_{x}(\cdot,\cdot)$ and the Cauchy-Schwarz inequality, we obtain
	\[\|\xi^{*m}\|_{0}^{2}-\|\xi^{*m-1}\|_{0}\|\xi^{*m}\|_{0}+\Delta t\|\xi^{*m}\|_{DG}^{2}\leq \Delta t\bigg(\partial_{\theta}Pv^{m}-\frac{\partial v^{m}}{\partial t},\xi^{*m}\bigg)-\Delta tB_{x}(\rho^{* m},\xi^{*m}). \]
	\[\|\xi^{*m}\|_{0}^{2}-\|\xi^{*m-1}\|_{0}^{2}+2\Delta t\|\xi^{*m}\|_{DG}^{2}\leq 2\Delta t\bigg(\partial_{\theta}Pv^{m}-\frac{\partial v^{m}}{\partial t},\xi^{*m}\bigg)-2\Delta tB_{x}(\rho^{* m},\xi^{*m}). \]
	We simplify the above equation  and, applying Theorem~\ref{theorem2}, we get
	\begin{eqnarray}\label{E26}
		\|\xi^{*m}\|_{0}^{2}-\|\xi^{*m-1}\|_{0}^{2}+2\Delta t\|\xi^{*m}\|_{DG}^{2}
		\leq &&\Delta t\bigg\|\partial_{\theta}Pv^{m}-\frac{\partial v^{m}}{\partial t}\bigg\|_{0}^{2}+\Delta t\|\xi^{*m}\|_{DG}^{2}\nonumber\\
		&&+C2\Delta t(\mathcal{N}^{-1}\ln \mathcal{N})^{k}\|\xi^{*m}\|_{DG}. 
	\end{eqnarray}
	Let $\Phi^{m}=\bigg(\partial_{\theta}Pv^{m}-\frac{\partial v^{m}}{\partial t}\bigg)=\bigg(\partial_{\theta}Pv^{m}-\partial_{\theta}v^{m}\bigg)+\bigg(\partial_{\theta}v^{m}-\frac{\partial v^{m}}{\partial t}\bigg)=T_{1}+T_{2},$
	
	where 
	\begin{eqnarray}\label{E27}
		T^{2}_{1}\leq&&\int_{I_x}\bigg[\frac{1}{\Delta t}\int_{t_{m-1}}^{t_{m}}\frac{\partial}{\partial t}\big(Pv^{}(.,s)-v^{}(.,s)\big)ds\bigg]^{2}dx\nonumber\\
		\leq&&\int_{I_x}\bigg[\frac{1}{\Delta t^{2}}\bigg(\int_{t_{m-1}}^{t_{m}}\bigg|\frac{\partial Pv^{}(.,s)}{\partial t}-\frac{\partial v(.,s)}{\partial t}\bigg|^{2}ds\bigg)\bigg(\int_{t_{m-1}}^{t_{m}}ds\bigg)\bigg]dx\nonumber\\
		\leq&& {C}\bigg\|\frac{\partial Pv^{}(.,s)}{\partial t}-\frac{\partial v^{}(.,s)}{\partial t}\bigg\|^{2}_{L^{\infty}(0,T;L^{2}(I_x))}
		\leq {C}(\mathcal{N}^{-1}\ln \mathcal{N})^{2k+2},
	\end{eqnarray}
	and 
	\begin{eqnarray}\label{E28}
		T^{2}_{2}&&\leq\int_{I_x}\bigg[\frac{1}{\Delta t}\big(v^{m}-v^{m-1}\big)-\frac{\partial v^{m}}{\partial t}\bigg]^{2}dx\nonumber\\
		&&\leq\int_{I_x}\bigg[\frac{-1}{\Delta t}\int_{t_{m-1}}^{t_{m}}\bigg((s-t_{m-1})\frac{\partial^{2}  v^{}(s)}{\partial t^{2}}\bigg)ds\bigg]^{2}dx\nonumber\\
		&&\leq \frac{1}{(\Delta t)^{2}}\int_{I_x}\bigg[\int_{t_{m-1}}^{t_{m}}(s-t_{m-1})^{2}ds\int_{t_{m-1}}^{t_{m}}\bigg(\frac{\partial^{2} v^{}(s)}{\partial t^{2}}\bigg)^{2}ds\bigg]dx\nonumber\\
		&&\leq\frac{\Delta t}{3} \int_{t_{m-1}}^{t_{m}}\big\| v_{tt}(s)\big\|^{2}ds.
	\end{eqnarray}
	Since $\|\xi^{*0}(x,y_{j})\|_{0}^{2} = 0$ for all $m$, $1 \leq m \leq M$, summing over $m=1$ to $m$, and using Equation \eqref{E26}-\eqref{E28}, we obtain
	\[\big\|\xi^{*m}(x,y_{j})\big\|_{0}^{2}+ C	\Delta t\sum_{l=1}^{m}\big|\xi^{* l}(x,y_{j})\big\|_{DG}^{2}\leq C\bigg((\Delta t)^{2}\int_{0}^{t_{m}}\big\|v_{tt}\big\|^{2}ds +(\mathcal{N}^{-1}\ln \mathcal{N})^{2k} \bigg).\]
	Similarly, we obtain
	\[\big\|\xi^{\dagger m}(x_{i},y)\big\|_{0}^{2}+ C	\Delta t\sum_{l=1}^{m}\big\|\xi^{\dagger l}(x_{i},y)\big\|_{DG}^{2}\leq C\bigg((\Delta t)^{2}\int_{0}^{t_{m}}\big\|w_{tt}\big\|^{2}ds +(\mathcal{N}^{-1}\ln \mathcal{N})^{2k} \bigg).\]
	Thus, we conclude the proof.
\end{proof}
\begin{theorem}\label{theorem4}
	Let $Pv$ and $Pw$ be the $L^{2}$-projection of the solution $v$ and $w$, respectively, and $v^{m}_{h}(x,y_{j})$ and $w^{m}_{h}(x_{i},y)$ be the numerical solution of the Equation \eqref{E10} and \eqref{E12}, respectively. Assume that $v\in H^{k+1}(I_x)$ and $w\in H^{k+1}(I_y).$  Then we have
	\begin{eqnarray}
		\big\|\xi^{*m}(x,y_{j})\big\|_{DG}^{2}&\leq& C\bigg((\Delta t)^{2}\int_{0}^{t_{m}}\big\|v_{tt}(s)\big\|^{2}ds +(\mathcal{N}^{-1}\ln \mathcal{N})^{2k} \bigg),\nonumber\\
		\big\|\xi^{\dagger m}(x_{i},y)\big\|_{DG}^{2}&\leq& C\bigg((\Delta t)^{2}\int_{0}^{t_{m}}\big\|w_{tt}(s)\big\|^{2}ds +(\mathcal{N}^{-1}\ln \mathcal{N})^{2k} \bigg).\nonumber
	\end{eqnarray}
\end{theorem}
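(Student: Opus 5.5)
Theorem~\ref{theorem3} only controls $\|\xi^{*m}\|_0^2$ and the time-summed quantity $\Delta t\sum_l\|\xi^{*l}\|_{DG}^2$. A pointwise-in-time DG bound cannot be read off from it directly, because the energy part $\mu\|\xi^{*m}\|_E^2$ appears only after multiplication by $\Delta t$. The plan is therefore a second energy estimate: test the error equation of Lemma~\ref{L9} with the discrete time derivative $\phi_h=\partial_\theta\xi^{*m}=(\xi^{*m}-\xi^{*m-1})/\Delta t$ instead of $\xi^{*m}$. The $y$-direction will follow verbatim from Lemma~\ref{L10}.

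With this choice the left-hand side becomes
\[
\|\partial_\theta\xi^{*m}\|_0^2+B_x(\xi^{*m},\partial_\theta\xi^{*m}).
\]
Split $B_x=B_x^{s}+B_x^{n}$, where $B_x^{s}$ is the symmetric diffusion part. Its quadratic form $B_x^{s}(\phi,\phi)$ is equivalent to $\mu\|\phi\|_E^2$ under the admissibility condition (Lemma~\ref{lemma2}, Lemma~\ref{Coercivity}). Use the identity $2B^s(a,a-b)=B^s(a,a)-B^s(b,b)+B^s(a-b,a-b)$, so that after multiplying by $2\Delta t$ the diffusion part telescopes:
\[
B_x^s(\xi^{*m},\xi^{*m})-B_x^s(\xi^{*m-1},\xi^{*m-1})+2\Delta t\|\partial_\theta\xi^{*m}\|_0^2\le 2\Delta t\,\mathrm{RHS}-2\Delta t\,B_x^n(\xi^{*m},\partial_\theta\xi^{*m}).
\]
The nonsymmetric convection/reaction part $B_x^n$ only involves $\xi$ and its jumps without a factor $\mu$. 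Integrating by parts back, it is bounded by $C\|\xi^{*m}\|_{DG}\|\partial_\theta\xi^{*m}\|_0$ up to upwind jump terms. Young's inequality then absorbs $\|\partial_\theta\xi^{*m}\|_0^2$ into the left-hand side, leaving $C\Delta t\|\xi^{*m}\|_{DG}^2$. That term is controlled after summation by Theorem~\ref{theorem3}.

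The right-hand side contains two terms.
\begin{itemize}
\item The consistency term $(\Phi^m,\partial_\theta\xi^{*m})$ is handled with Young's inequality and the bounds on $T_1,T_2$ from \eqref{E27}--\eqref{E28}. This yields $(\Delta t)^2\int_0^{t_m}\|v_{tt}\|^2ds+(\mathcal N^{-1}\ln\mathcal N)^{2k+2}$ after summation.
\item The term $B_x(\rho^{*m},\partial_\theta\xi^{*m})$ needs care, since Theorem~\ref{theorem2} bounds it only by the DG norm of $\partial_\theta\xi$, which is not controlled. Here I would use summation by parts in time: $\Delta t\sum_l B_x(\rho^{*l},\partial_\theta\xi^{*l})=B_x(\rho^{*m},\xi^{*m})-\Delta t\sum_l B_x(\partial_\theta\rho^{*l},\xi^{*l-1})$, using $\xi^{*0}=0$. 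Apply Theorem~\ref{theorem2} to each piece. For the second piece, use the interpolation bounds of Theorem~\ref{theorem1} applied to $v_t$, which are legitimate by Lemma~\ref{lemma1} with $j=1$. This gives $C(\mathcal N^{-1}\ln\mathcal N)^k\|\xi^{*m}\|_{DG}$ plus $C(\mathcal N^{-1}\ln\mathcal N)^k\,\Delta t\sum_l\|\xi^{*l-1}\|_{DG}$.
\end{itemize}
Young's inequality absorbs the first into $\tfrac{\lambda}{2}\mu\|\xi^{*m}\|_E^2$ (plus the $L^2$ part already bounded by Theorem~\ref{theorem3}). The second is bounded by Cauchy--Schwarz in time together with Theorem~\ref{theorem3}.

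Summing from $1$ to $m$ and using $\xi^{*0}=0$ gives
\[
\mu\|\xi^{*m}\|_E^2\le C\big((\Delta t)^2\textstyle\int_0^{t_m}\|v_{tt}\|^2ds+(\mathcal N^{-1}\ln\mathcal N)^{2k}\big)+C\Delta t\sum_{l\le m}\|\xi^{*l}\|_{DG}^2.
\]
The final sum is again covered by Theorem~\ref{theorem3}, or alternatively by a discrete Gronwall argument. Adding $\|\xi^{*m}\|_0^2$ from Theorem~\ref{theorem3} yields the claimed DG bound.

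I expect the main obstacle to be the non-symmetric part of $B_x$ together with the $\rho$-term: both must be kept from producing the uncontrolled quantity $\mu\|\partial_\theta\xi\|_E$. The summation-by-parts-in-time trick, and bounding the convection terms in $L^2$ of $\partial_\theta\xi$ (not in the DG norm), are what I would try first. If the convection jump terms resist this, a fallback is an inverse-inequality argument on the Shishkin mesh, at the cost of a mild $\Delta t$--$\mathcal N$ coupling.
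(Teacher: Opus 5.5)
\textbf{There is a genuine gap: your bound on the convection part of $B_x$ is not uniform in $\mu$.}

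Your overall route is the paper's route. You test the error equation of Lemma~\ref{L9} with $\partial_\theta\xi^{*m}$, telescope, sum in time, and fall back on Theorem~\ref{theorem3}. Your summation by parts in time for $B_x(\rho^{*m},\partial_\theta\xi^{*m})$ is more careful than the paper, which only points to \eqref{E26}, an estimate for $B_x(\rho^{*m},\xi^{*m})$.

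The step that fails is the claim $|B_x^n(\xi^{*m},\partial_\theta\xi^{*m})|\le C\|\xi^{*m}\|_{DG}\|\partial_\theta\xi^{*m}\|_0$ with $C$ independent of $\mu$. Take $\phi=\partial_\theta\xi^{*m}$. Integrating by parts turns the convection part into $\sum_i\int_{\mathcal K_{x_i}}(b_1\xi^{*m})_x\phi\,dx$ plus interface terms $b_1[\xi^{*m}]_i\bigl(\theta\phi(x_i^+)+(1-\theta)\phi(x_i^-)\bigr)$. These are controlled by $|\xi^{*m}|_1$ and $\bigl(\sum_i[\xi^{*m}]_i^2/h_i\bigr)^{1/2}$. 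The DG norm, however, contains both only with the weight $\mu$, so the best bound available is $C\mu^{-1/2}\|\xi^{*m}\|_{DG}\|\partial_\theta\xi^{*m}\|_0$.

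This is sharp. For a continuous hat function $\xi$ on the fine part of the Shishkin mesh and $\phi=\xi_x$, the ratio is of order $\mu^{-1/2}$. Young's inequality then leaves $C\mu^{-1}\Delta t\|\xi^{*m}\|_{DG}^2$, and Theorem~\ref{theorem3} turns this into a constant of order $\mu^{-1}$; a discrete Gronwall argument gives $e^{CT/\mu}$ instead. Either way parameter-uniformity, which is the whole content of the statement, is lost. Your inverse-inequality fallback does not help. On the fine mesh $h_i^{-1}\sim\mu^{-1}\mathcal N/\ln\mathcal N$, so the resulting restriction involves $\mu$, not only $\Delta t$ and $\mathcal N$.

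A second, related oversight: the diffusion part of $B_x$ is not symmetric either. The flux term $\mu\eta_1h_i[\xi_{xx}]_i[\phi]_i$ has no transposed counterpart, because the interface correction is $[\xi]\{\phi_x\}$ rather than $[\xi]\widehat{\phi_x}$. Its antisymmetric part therefore carries a factor $\mu$, contrary to your description of $B_x^n$. Bounding it against $\|\partial_\theta\xi^{*m}\|_0$ by inverse estimates costs a factor $\mu^{1/2}h_i^{-1}$, which is of order $\mu^{-1/2}\mathcal N/\ln\mathcal N$ on the fine mesh.

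The paper's own proof does not close these gaps. It writes $B_x(\xi^{*m},\xi^{*m})-B_x(\xi^{*m},\xi^{*m-1})$ and, after summation, simply records $\tfrac12\|\xi^{*m}\|_{DG}^2$. That step uses $B(a,a)-B(a,b)=\tfrac12B(a,a)-\tfrac12B(b,b)+\tfrac12B(a-b,a-b)+\tfrac12\bigl(B(b,a)-B(a,b)\bigr)$ and silently drops the last, skew-symmetric remainder. So both arguments are missing a $\mu$-uniform treatment of the skew-symmetric part of $B_x$ tested against $\partial_\theta\xi^{*m}$. Without it, what this energy argument delivers uniformly in $\mu$ is only the $\ell^\infty(L^2)\cap\ell^2(\mathrm{DG})$ bound of Theorem~\ref{theorem3}.
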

\begin{proof} Taking $\phi=\partial_{\theta}\xi^{*m}$, in Equation \eqref{E25}, we obtain
	\[(\partial_{\theta}\xi^{*m},\partial_{\theta}\xi^{*m})+B_{x}(\xi^{*m},\partial_{\theta}\xi^{*m})= \bigg(\partial_{\theta}Pv^{m}-\frac{\partial v^{m}}{\partial t},\partial_{\theta}\xi^{*m}\bigg)-B_{x}(\rho^{* m},\partial_{\theta}\xi^{*m}).\]
	It follows from  the Cauchy-Schwarz inequality; we obtain
	\begin{eqnarray*}
		&\Delta t\|\partial_{\theta}\xi^{*m}\|_{0}^{2}+B_{x}(\xi^{*m},\xi^{*m})-B_{x}(\xi^{*m},\xi^{* m-1})\nonumber\\
		&\leq\frac{\Delta t}{2}\bigg\|\partial_{\theta}Pv^{m}-\frac{\partial v^{m}}{\partial t}\bigg\|_{0}^{2}+\frac{\Delta t}{2}\|\partial_{\theta}\xi^{*m}\|_{0}^{2}-\Delta tB_{x}(\rho^{* m},\partial_{\theta}\xi^{*m}).
	\end{eqnarray*}
	From Equation \eqref{E26}, \eqref{E27} and \eqref{E28}, for any $m$-th time-step $\forall$ $1\leq m\leq M$. summing $m=1$ to $m$,  we get
	\begin{eqnarray*}
		&&\frac{\Delta t}{2}\sum_{l=1}^{m}\big\|\partial_{\theta}\xi^{l}(x,y_{j})\big\|_{0}^{2}+	\frac{1}{2}\big\|\xi^{*m}(x,y_{j})\big\|_{DG}^{2}\nonumber\\
		&&\leq C\bigg((\Delta t)^{2}\int_{0}^{t_{m}}\big\|v_{tt}\big\|^{2}ds +(\mathcal{N}^{-1}\ln \mathcal{N})^{2k} \bigg).
	\end{eqnarray*}
	Thus, we conclude the proof. 
\end{proof}
\begin{theorem}\label{theorem5}
	Let $v$ and $w$ denote the exact solutions corresponding to the subproblems \eqref{E3} and \eqref{E4}, respectively, and define $u = v + w$. 
	Let $v^{m}_{h}(x,y_{j})$ and $w^{m}_{h}(x_{i},y)$ be the numerical solutions of \eqref{E10} and \eqref{E12}, respectively, and define
	$u^{m}_{h} = v^{m}_{h}(x,y_{j}) + w^{m}_{h}(x_{i},y)$. Then we have the following error estimate:
	\begin{eqnarray}
		\big\|\mathcal{E}^{m}_{h,i,j}\big\|_{0}^{2}+ C	\Delta t\sum_{l=1}^{m}\big\|\mathcal{E}^{l}_{h,i,j}\big\|_{DG}^{2}&\leq& C\bigg((\Delta t)^{2}\int_{0}^{t_{m}}\big\|u_{tt}\big\|^{2}ds +(\mathcal{N}^{-1}\ln \mathcal{N})^{2k} \bigg),\nonumber\\
		\big\|\mathcal{E}^{m}_{h,i,j}\big\|_{DG}^{2}&\leq& C\bigg((\Delta t)^{2}\int_{0}^{t_{m}}\big\|u_{tt}\big\|^{2}ds +(\mathcal{N}^{-1}\ln \mathcal{N})^{2k} \bigg),\nonumber
	\end{eqnarray}
	where \[\big\|\mathcal{E}^{m}_{h,i,j}\big\|_{DG}\leq \big\|E^{* m}_{h,j}\big\|_{DG}+\big\|E^{\dagger m}_{h,i}\big\|_{DG}.\]
\end{theorem}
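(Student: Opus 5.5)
We obtain Theorem~\ref{theorem5} by assembling the directional estimates already proved; no new energy argument is needed. By the splitting of Section~\ref{sec(2.4)}, for every pair of mesh indices $i,j$ we have
$\|\mathcal{E}^{m}_{h,i,j}\|\le \|E^{*m}_{h,j}\|+\|E^{\dagger m}_{h,i}\|$ in both the $L^2$ and DG norms. Each directional error is in turn bounded by an interpolation part plus a discretization part: $\|E^{*m}_{h,j}\|\le\|\rho^{*m}\|+\|\xi^{*m}\|$, and similarly in $y$. Squaring and using $(a+b+c+d)^2\le 4(a^2+b^2+c^2+d^2)$ therefore reduces everything to bounding $\|\rho^{*m}\|^2$, $\|\rho^{\dagger m}\|^2$, $\|\xi^{*m}\|^2$ and $\|\xi^{\dagger m}\|^2$ separately, uniformly in $i,j$ and $m$.

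\emph{Interpolation parts.} Theorem~\ref{theorem1} gives $\|\rho^{*m}\|_{DG}^2+\|\rho^{\dagger m}\|_{DG}^2\le C(\mathcal N^{-1}\ln\mathcal N)^{2k}$ at every time level. The $L^2$ parts are bounded by the same quantity, since $\|\cdot\|_0\le\|\cdot\|_{DG}$. For the time-summed term, $\Delta t\sum_{l=1}^m$ of these bounds is at most $t_m C(\mathcal N^{-1}\ln\mathcal N)^{2k}\le TC(\mathcal N^{-1}\ln\mathcal N)^{2k}$. Theorem~\ref{theorem1} is stated for a fixed but arbitrary $m$, and Lemma~\ref{lemma1} gives derivative bounds uniform in $t$, so the constant $C$ does not depend on $l$.

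\emph{Discretization parts.} For the first inequality we add the two estimates of Theorem~\ref{theorem3}. This gives $\|\xi^{*m}\|_0^2+\|\xi^{\dagger m}\|_0^2+C\Delta t\sum_l(\|\xi^{*l}\|_{DG}^2+\|\xi^{\dagger l}\|_{DG}^2)$ bounded by $C\big((\Delta t)^2\int_0^{t_m}(\|v_{tt}\|^2+\|w_{tt}\|^2)\,ds+(\mathcal N^{-1}\ln\mathcal N)^{2k}\big)$. For the second inequality we add the two estimates of Theorem~\ref{theorem4} in the same way.

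The remaining step, which I expect to be the main obstacle, is to replace $\|v_{tt}\|^2+\|w_{tt}\|^2$ by $\|u_{tt}\|^2$. The theorem identifies $u=v+w$ through the splitting, but $\|v_{tt}\|+\|w_{tt}\|\le C\|u_{tt}\|$ does not follow from the triangle inequality alone. I would therefore argue as follows. On each step, $v$ and $w$ solve \eqref{E3}--\eqref{E4} with data built from $u(\cdot,t_{m-1})$ and $r_1,r_2$. By Lemma~\ref{lemma1}, and by the bounds preceding it applied with $i=0$, $j=2$, their second time derivatives are bounded by the same constant that bounds $u_{tt}$. Both quantities are thus controlled by a common constant $C$ depending only on the data, which we absorb into the generic $C$ in the form $\int_0^{t_m}(\|v_{tt}\|^2+\|w_{tt}\|^2)\,ds\le C\int_0^{t_m}\|u_{tt}\|^2\,ds$. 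Equivalently, one may read $\|u_{tt}\|$ in the statement as shorthand for the pair $(v_{tt},w_{tt})$; this is the point where I would be explicit about the convention.

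Finally, we combine the interpolation and discretization bounds with the factor $4$ from the squared triangle inequality and take the maximum over $i,j$. This yields both asserted estimates with a constant independent of $\mu$, $\mathcal N$ and $M$.
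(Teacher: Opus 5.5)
Your assembly matches the paper's proof, which consists only of the remark that the result follows from Theorems~\ref{theorem1}, \ref{theorem3} and \ref{theorem4}. Your bookkeeping is fine: the squared triangle inequality, $\|\cdot\|_0\le\|\cdot\|_{DG}$, and $\Delta t\sum_{l=1}^{m}\le T$ for the interpolation terms.

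The one step where you go beyond the paper, however, fails as written. Upper bounds $\|v_{tt}\|,\|w_{tt}\|\le C$ and $\|u_{tt}\|\le C$ do not give $\int_0^{t_m}(\|v_{tt}\|^2+\|w_{tt}\|^2)\,ds\le C\int_0^{t_m}\|u_{tt}\|^2\,ds$. That would need a \emph{lower} bound on $\|u_{tt}\|$, and none is available. For example, take time-independent data for which $u$ is stationary, so $u_{tt}\equiv0$. The split solution is still in general not stationary. Indeed, $u(t_{m-1})$ satisfies only the summed equation $(\mathcal{L}_{x,\mu}+\mathcal{L}_{y,\mu})u=r_1+r_2$. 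Hence $v_t(t_{m-1}^+)=r_1-\mathcal{L}_{x,\mu}u(t_{m-1})$ is generically nonzero. Since $r_1$ is frozen in \eqref{E3}, this gives $v_{tt}=-\mathcal{L}_{x,\mu}v_t\neq0$. If instead $u=v+w$ is read literally, the triangle inequality runs the wrong way, as you yourself note.

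So your two options are not ``equivalent'': the first is false in general, and the second changes what is being proved. What Theorems~\ref{theorem3} and \ref{theorem4} actually deliver is the estimate with $\int_0^{t_m}(\|v_{tt}\|^2+\|w_{tt}\|^2)\,ds$ on the right-hand side. The uniform bounds you invoke (those stated just before Lemma~\ref{lemma1}, with $i=0$, $j=2$) convert this into the $\mu$-uniform bound $C\big((\Delta t)^2+(\mathcal{N}^{-1}\ln\mathcal{N})^{2k}\big)$. Drop the absorption claim and state the conclusion in one of these two forms. The paper's one-line proof silently identifies $\|u_{tt}\|^2$ with $\|v_{tt}\|^2+\|w_{tt}\|^2$. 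With that convention made explicit, your argument proves exactly what the paper's does, and nothing more.
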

\begin{proof} 
	The desired result follows from Theorems  \eqref{theorem1}, \eqref{theorem3}, and \eqref{theorem4}.
\end{proof}
\section{Numerical experiments}\label{sec(2.5)}
In this section, we conduct numerical experiments to validate the theoretical results established in the preceding section. Two numerical examples are considered: one involving constant coefficients and another involving variable coefficients.  The outcomes are illustrated through figures and tables, enabling a direct comparison between the theoretical predictions and the computed solutions. These visualizations provide strong evidence for the accuracy, robustness, and reliability of the proposed method. For the numerical simulations, we consider two discretization strategies. Specifically, for $k = 1$, we set $\Delta t = \mathcal{N}^{-1}$, and for $k = 2$, we set $\Delta t = \mathcal{N}^{-2}$, ensuring sufficient temporal resolution relative to the spatial discretization.
\begin{example}\label{example1}
	Consider the following two-dimensional one-parameter singularly perturbed IBVPs on $\mathcal{D} = I$$\times$$(0, T]:$
	\begin{equation*}
		\left\{
		\begin{array} {ll}
			\frac{\partial u}{\partial t}-\mu \Delta u + u_x +u_y = r(x,y,t), & (x,y,t) \in \mathcal{D},\\[8pt]
			u(x,y,0)=f(x,y), &  (x,y) \in \overline{I},\\[8pt]
			u(x,y,t)=0, & (x,y,t) \in \partial I\times(0,T].
		\end{array}\right.
	\end{equation*}
\end{example}
\begin{example}\label{example2}
	Consider the following  two-dimensional one-parameter singularly perturbed IBVPs on $\mathcal{D} = I$$\times$$(0, T]:$
	\begin{equation*}
		\left\{
		\begin{array} {ll}
			\frac{\partial u}{\partial t}-\mu \Delta u + (1+x(1-x))u_x +(1+y(1-y))u_y + u = r, \quad (x,y,t) \in \mathcal{D},\\[8pt]
			u(x,y,0)=f(x,y), \quad \quad  (x,y) \in \overline{I},\\[8pt]
			u(x,y,t)=0, \qquad \qquad \quad (x,y,t) \in \partial I\times(0,T].
		\end{array}\right.
	\end{equation*}
\end{example}
For both Examples \ref{example1} and \ref{example2}, the source term $r(x,y,t)$ 
and initial condition $f(x,y)$ is constructed from the same exact solution
\[u(x,y,t) = (1 - e^{-t})\big(C_{0} + C_{1}x + exp(-(1-x)/\mu)\big)\big(C_{0} + C_{1}y + exp(-(1-y)/\mu)\big),\]
where
\[
C_{0} = -exp(-1/\mu), \qquad C_{1} = -1 - C_{0}.
\]
The decomposition is chosen as 
\begin{equation*}
	r(x,y,t) =	r_{1}(x,y,t) + r_{2}(x,y,t),
\end{equation*}
where
\begin{equation*}
	r_{2}(x,y,t) =	r(x,0,t) + y(r(x,1,t)-r(x,0,t)).
\end{equation*}
In obtaining the numerical solution through the DDG method, we take $\eta_{0}$ from Table~\ref{table_1}, and set $\eta_{1}=\frac{1}{2k^{2}+2k}$, as given in {\cite[Lemma $3.2$]{Cao_DDG}}. Furthermore, in \eqref{flux3} and \eqref{flux4}, we set $\theta = \frac{1}{2}$. The numerical convergence rate is defined by $\mathcal{R} = \log_{2}\left(\frac{e^{\mathcal{N}\times \mathcal{N},M }}{e^{2\mathcal{N} \times 2\mathcal{N},2M}}\right),$ where $e^{\mathcal{N} \times \mathcal{N},M}=\bigg(\sum_{l=1}^{M}\Delta t\|u^{l}-u^{l}_{h}\|_{DG}\bigg)^{1/2}$ denotes the computational error obtained using $\mathcal{N}\times \mathcal{N}$ spatial intervals and time step $\Delta t$.

To assess the accuracy and convergence properties of the proposed method, we evaluate both the DG-norm error and the corresponding convergence rate. The numerical results for Example~\ref{example1} and \ref{example2}  are presented in Table~\ref{tbl_2} and ~\ref{tbl_3}, respectively. These tables report the numerical performance of the proposed method for polynomial degrees $k=1$ and $k=2$ under various values of the singular perturbation parameter $\mu$.

To further demonstrate the capability of the method in resolving boundary layers, surface plots of the numerical and exact solutions are provided for the parabolic IBVP. Specifically, for $\mu = 10^{-4}$, computation are performed using $\mathcal{N} = 32$ and $\Delta t = \frac{1}{32}$ for Example~\ref{example1}. The corresponding results are illustrated in Figure~\ref{figure1}.
\begin{table}[htbp]
	\caption{\it{Errors and observed convergence orders for Example \ref{example1} with $k=1,$ we set $\Delta t = \mathcal{N}^{-1}$  and $k=2,$ we set $\Delta t = \mathcal{N}^{-2}$.}}\label{tbl_2}
	\begin{tabular}{@{}llllll@{}}
		\hline
		&\multicolumn 1 {c}{$\mu$}  & \multicolumn 4 {c}
		{$\mathcal{N} \times \mathcal{N}$}\\
		\hline
		&	  & $8\times 8$ & $16\times 16$ & $32\times 32$  & $64\times 64$  \\
		\hline
		
		&	$10^{-4}$ &        6.4096e-02 &  3.3772e-02 &  1.7497e-02  & 8.8993e-03  \\
		&	  &  0.924  &  0.948  &  0.975 &    \\ [6pt]
		
		${k=1}$	&	$10^{-6}$  &      6.4479e-02  & 3.3784e-02  & 1.7502e-02 &  8.9021e-03\\
		&	          &   0.932 &   0.948  &  0.975 &   \\ [6pt]
		
		&	$10^{-8}$    &    6.4589e-02 &  3.3812e-02 &  1.7506e-02 &  8.9027e-03 \\
		&	          &  0.933  &  0.949  &  0.975 & \\ [6pt]
		
		\hline
		
		&	$10^{-4}$  &   3.4587e-04  & 9.5899e-05   & 2.5862e-05  & 6.8237e-06 \\
	&     &	   1.850   & 1.890  &  1.922  &\\ [6pt]
	
		${k=2}$&	$10^{-6}$  &   2.9976e-04  & 8.7210e-05   & 2.4913e-05  & 6.6921e-06 \\
	&     &	   1.781  & 1.807  &  1.896  &\\ [6pt]
	
	&	$10^{-8}$   &   2.9975e-04  & 8.7206e-05   & 2.4911e-05  & 6.6921e-06 \\
	&     &	   1.781  & 1.807  &  1.896  &\\ [6pt]
		\hline
	\end{tabular}
\end{table}
\begin{figure}[htbp]
	\centering
	\includegraphics[width=0.85\textwidth]{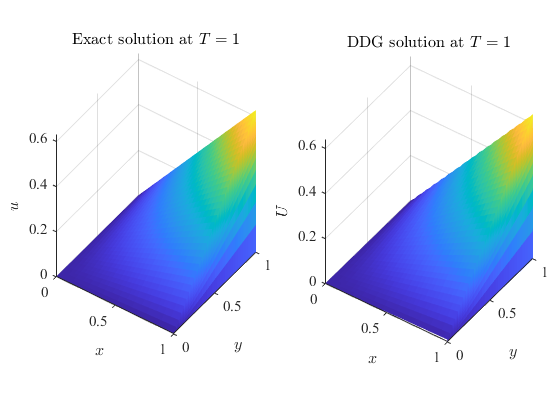}
	\caption{Comparison of the exact and DDG solutions for Example~\ref{example1} at $T=1$, obtained on a $32 \times 32$ mesh with $\Delta t = \frac{1}{32}$, for $\mu=10^{-4}$.}\label{figure1}
\end{figure}

\begin{table}[htbp]
	\caption{\it{Errors and observed convergence orders for Example \ref{example2} with $k=1,$ we set $\Delta t = \mathcal{N}^{-1}$  and $k=2,$ we set $\Delta t = \mathcal{N}^{-2}$.}}\label{tbl_3}
	\begin{tabular}{@{}llllll@{}}
		\hline
		&\multicolumn 1 {c}{$\mu$}  & \multicolumn 4 {c}
		{$\mathcal{N} \times \mathcal{N}$}\\
		\hline
		&	   & $8\times 8$ & $16\times 16$ & $32\times 32$ & $64\times 64$  \\
		\hline
		
		&	$10^{-4}$ &    8.4767e-02  & 3.4351e-02 &  1.7282e-02 &  8.8125e-03  \\
		&	  &   1.303 &   0.991 &   0.971 &    \\ [6pt]
		
		${k=}1$	&	$10^{-6}$  &     6.6851e-02 &  3.3943e-02 &  1.7159e-02 &  8.7591e-03 \\ 
		&	          & 0.977 &   0.984 &   0.970 &   \\ [6pt]
		
		&	$10^{-8}$    & 7.3029e-02  & 3.5062e-02  & 1.7592e-02 &  8.7766e-03 \\
		&	          &  1.058  &  0.995  &  1.003 & \\ [6pt]
		
		\hline
		
		&	$10^{-4}$  &    2.5427e-05 &  5.9351e-06 &  1.4671e-06 &  3.7285e-07 \\
		&     &	   2.099 &  2.016 &    1.976   &\\ [6pt]
		
		${k=}2$&	$10^{-6}$  &      2.5393e-06 &  5.9256e-07 &  1.4640e-07 &  3.7196e-08 \\
		&	 &   2.099 &  2.017  &  1.976 &  \\ [6pt]
		
		&	$10^{-8}$    &      2.5393e-06 &  5.9255e-07  & 1.4640e-07  & 3.7195e-08  \\
		&	          &     2.099  &  2.017  &  1.976 & \\ [6pt]	
		\hline
	\end{tabular}
\end{table}
\section{Conclusion}\label{Conclusion}
In this work, we developed an ADI-type operator-splitting DDG method for two-dimensional one-parameter singularly perturbed parabolic problems, thereby addressing a gap in the literature, where the DDG method has not yet been systematically investigated for two-dimensional singularly perturbed  parabolic problems. The proposed scheme decomposes the original two-dimensional problem into a sequence of one-dimensional subproblems, thereby reducing the number of degrees of freedom and substantially lowering the computational cost and memory requirements compared with a direct two-dimensional discretization. The spatial discretization was carried out using the DDG method on a  piecewise-uniform Shishkin mesh, which effectively resolves the boundary layers arising from the singular perturbation, while the backward Euler method was employed for time integration owing to its unconditional stability for SPPs. The coercivity and stability properties of the proposed fully discrete formulation are established, and DG-norm error bounds for the spatial discretization, independent of the perturbation parameter $\mu$, are derived, demonstrating the robustness of the proposed method. Furthermore, we proved the consistency and convergence of the fully discrete scheme. The theoretical analysis yielded an optimal spatial convergence rate of order $k$, where $k$ denotes the degree of the polynomial basis functions, together with first-order convergence in time. Numerical experiments involving both constant and variable convection coefficients confirmed the theoretical error estimates and demonstrated the robustness and efficiency of the proposed method.

\section*{Acknowledgment}
The first author gratefully acknowledges the financial support provided by the National Institute of Technology, Tiruchirappalli, for his research.

\section*{Conflict of interest}
The authors declare that they have no conflicts of interest.

\section*{Author contribution}
Vijay Kumar: Writing – original draft, Formal analysis, Methodology, Numerical computations.
Gautam Singh: Writing – review and editing, Investigation, Conceptualization, Numerical computations.

\end{document}